\newtheorem{thm}{Theorem}[section]
\newtheorem{cor}[thm]{Corollary}
\newtheorem{prop}[thm]{Proposition}
\newtheorem{defn}[thm]{Definition}
\theoremstyle{remark}
\newtheorem{remark}[thm]{Remark}
\numberwithin{equation}{section}
\newtheorem{example}[thm]{Example}
\newcommand{\cA}{\mathcal A}
\newcommand{\cC}{\mathcal C}
\newcommand{\cD}{\mathcal D}
\newcommand{\cF}{\mathcal F}
\newcommand{\cG}{\mathcal G}
\newcommand{\cK}{\mathcal K}
\newcommand{\cL}{\mathcal L}
\newcommand{\cM}{\mathcal M}
\newcommand{\cN}{\mathcal N}
\newcommand{\cU}{\mathcal U}
\newcommand{\cY}{\mathcal Y}
\newcommand{\bbR}{\mathbb R}
\newcommand{\bbT}{\mathbb T}
\newcommand{\bbZ}{\mathbb Z}
\newcommand{\rank}{{\rm rank\ }}
\begin{document}

\title{Action-Angle variables on Dirac manifolds}

\author{Nguyen Tien Zung}
\address{Institut de Mathématiques de Toulouse, UMR5219, Université Toulouse 3}
\email{tienzung.nguyen@math.univ-toulouse.fr}

\begin{abstract}{%
The main purpose of this paper is to show the existence of action-angle variables for integrable Hamiltonian
systems on Dirac manifolds under some natural regularity and compactness conditions, using the torus action approach. 
We show that the Liouville torus actions of general integrable dynamical systems have the structure-preserving
property with respect to any underlying geometric structure of the system, and deduce the existence of action-angle variables
from this property. We also discover co-affine structures on manifolds as a by-product of our
study of action-angle variables.
}\end{abstract}

\date{Version 1, 17 April 2012}
\subjclass{37G05, 37J35,70H06,70H45}
\keywords{action-angle variables, integrable system, Dirac manifold, presymplectic}%

\maketitle

\section{Introduction}

Action-angle variables play a fundamental role in classical and quantum mechanics. They are
the starting point of the famous Kolmogorov--Arnold--Moser theory about the persistence of
quasi-preridicity of the motion integrable Hamiltonian systems under perturbations
(see, e.g., \cite{BroerSevruyk-KAM2010}). They are also the starting point of geometric 
quantization rules which go back to the works of Bohr, Sommefeld, 
Epstein and Einstein (see, e.g., \cite{BergiaNavarro-EinsteinQuantization2000}), 
and also of semi-classical quantization of integrable
Hamiltonian systems (see, e.g., \cite{San-Semiclassical2006}). The quasi-periodicity
of the movement of general proper integrable Hamiltonian systems in angle variables was 
discovered by Liouville \cite{Liouville-1855}. The first essentially complete proof of the theorem about the 
existence of action-angle variables near a Liouville torus on asymplectic manifold, 
which is often called Arnold--Liouville  theorem, is due to the astrophysicist Mineur 
\cite{Mineur-AA1935,Mineur-AA1935-37}, who was also motivated by the quantization problem. 

There have been generalizations of Arnold--Liouville--Mineur 
action-angle variables theorem to various contexts, including noncommutatively integrable systems
(see, e.g., \cite{Nekhoroshev-AA1972,MF-Noncommutative1978,DazordDelzant-AA1987}), systems on almost-symplectic
manifolds  \cite{FassoSansonetto-AA2007}, on contact manifolds 
(see, e.g., \cite{KhesinTaba-ContactIntegrable2010,Jovanovic_AAContact2011}), on Poisson manifolds 
\cite{LMV-AA2011}, and so on. Action-angle variables near singularities of integrable systems 
have also been studied (see, e.g. \cite{DufourMolino-AA1990, MirandaZung-NF2004,
San-Semiclassical2006,Zung-Integrable1996,Zung-Convergence2005,Zung-Torus2006}).  
But as far as we know, action-angle variables on general 
Dirac manifolds, or even general presymplectic manifolds, have not been studied in the literarture.
The aim of this paper is to remedy this situation, by extending the action-angle variables 
theorem to the case of integrable Hamiltonian systems on Dirac manifolds, which include systems on Poisson and
presymplectic manifolds as paricular cases. 

There are at least 2 reasons why we are interested in integrable Hamiltonian
on Dirac manifolds. The first is that Dirac structures appear naturally in systems with constraints.
For example, take an integrable Hamiltonian system on a Poisson manifold $(M,\Pi),$ and consider an invariant
submanifold
\begin{equation}
Q = Q_{c_1,\hdots, c_k}  = \{x \in M \ | \ F_1(x) = c_1,\hdots, F_k(x) = c_k\}
\end{equation}
where $(F_1,\hdots, F_k)$ is a partial family of commuting first integrals of the system.
Then, in general, under some regularity condition, $Q$ is a Dirac manifold (which is neither Poisson nor presymplectic)
on which the restricted system is still an integrable Hamiltonian system, and it still makes sense to talk about
action-angle variables on $Q$. In particular, we will make use of action-angle variables in 
a work in progress on the geometry of integrable dynamical systems on 3-manifolds.

The second reason is that, given an integrable dynamical system without any
a-priori Hamiltonian structure, it has more chance to become Hamiltonian with respect to a Dirac structure
than with respect to a symplectic or Poisson structure. This second reason may be important for applications
in fields like biology and economics, where due to ``forgotten'' variables even systems which are Hamiltonian
don't look Hamiltonian at the first sight at all. And since our setting is rather general, one can recover from the main results 
of this paper various  action-angle variables theorems in the literature.

The approach that we follow in this paper is a geometric approach based on the \emph{toric philosophy}: locally 
every dynamical system admits an intrinsic torus action which preserves anything which is preserved by the system.
(This torus action is a kind of double commutator). This toric philosophy is rather powerful. In particular, it allowed us to prove
the existence of convergent Poincaré-Birkhoff normalization for any analytic integrable system near a singular point
\cite{Zung-Convergence2005,Zung-Convergence2002}. In this paper, we will show that, for integrable systems near a regular 
level set, the role of this intrinsic torus action is played by nothing else than the Liouville torus action, i.e. 
the Liouville torus action preserves any underlying geometric structure
which is preserved by the system. In particular, for systems on Dirac manifolds, the Liouville torus action preserves
the Dirac structure. Using this property, one can construct relatively easily action-angle variables. We believe that,
even in the classical symplectic case, our proof is more conceptual and easier to understand and to generalize than
some existing proofs in the literature.

The organization of this paper is as follows: In Section 2 we recall some basic notions about Dirac structures and Hamiltonian
systems on Dirac manifolds, and show some  simple results about co-Lagrangian submanifolds on Dirac manifolds
(Theorem \ref{thm:co-Lagrangian1} and Theorem \ref{thm:co-Lagrangian2}), which are related to action-angle variables. 
In Section 3 we recall the notion of Liouville torus actions for general integrable dynamical systems, and show that
these Liouville torus actions have the structure-preserving property (Theorem \ref{thm:TorusPreservesStructure} and
Theorem \ref{thm:TorusPreservesStructureII}). Finally, Section 4 is about integrable Hamiltonian systems on Dirac manifolds,
where, using the results of Section 3, we show that if the system is Hamiltonian then
the Liouville torus action is also Hamiltonian
(Theorem \ref{thm:LiouvilleHamiltonianAction}), and deduce from this result the existence of action-angle variables,
both in the ``commutative'' case when the Liouville tori are Lagrangian (Theorem \ref{thm:fullAA} about full action-angle variables)
and in the  ``noncommutative'' case when the Liouville tori are isotropic (Theorem \ref{thm:partialAA} about partial action-angle variables). 
As a by-product of our study, we get a new kind of geometric structures on manifolds, called \emph{co-affine} structures
(see Subsection 4.3), which are induced from integrable Hamiltonian systems on presymplectic manifolds, 
and which seem to be very interesting by themselves.

\section{Dirac manifolds and co-Lagrangian submanifolds}

In this section, let us briefly recall some basic notions about Dirac manifolds and Hamiltonian systems on them 
(see, e.g., \cite{Bursztyn-Dirac2010,Courant-Dirac1990, Kosmann-Dirac2011},
Appendix A8 of \cite{DufourZung-PoissonBook}, and references therein). 
We will also write down some basic results about (co-)Lagrangian submanifolds of Dirac manifolds,
which are similar to Weinstein's results
on Lagrangian submanifolds of symplectic manifolds \cite{Weinstein-Symplectic1977},
and which are related to action-angle variables.

Dirac structures were  first used by Gelfand and Dorfman 
(see, e.g., \cite{GelfandDorfman-Hamiltonian1979,Dorfman-Dirac1987})
in the study of integrable systems, and were formalized by Weinstein and 
Courant in \cite{CourantWeinstein-Dirac1988,Courant-Dirac1990}
in terms of involutive isotropic subbundles of the ``big'' bundle $TM \oplus T^*M$. 
They generalize both (pre)symplectic and Poisson structures, and prove to be a 
convenient setting for dealing with  systems with constraints and reduction problems.

On the direct sum $TM \oplus T^*M$ of the tangent and the cotangent 
bundles of a smooth $n$-dmensional manifold $M$ there is a natural indefinite symmetric scalar
product of signature $(n,n)$ defined by the formula
\begin{equation}
 \langle (X_1,\alpha_1), (X_2,\alpha_2)\rangle = \frac{1}{2}(\langle \alpha_1,X_2\rangle + \langle \alpha_2,X_1\rangle )
\end{equation}
for sections $(X_1,\alpha_1), (X_2,\alpha_2) \in \Gamma (TM \oplus T^*M)$. A vector subbundle 
$\cD \subset TM \oplus T^*M$ is called {\bf isotropic} if the restriction of 
the indefinite scalar product to it is identically zero. 
On the space of smooth sections of $TM \oplus T^*M$ there is an operation, called the
\emph{Courant bracket}, defined by the formula
\begin{equation}
[(X_1,\alpha_1), (X_2,\alpha_2)] := ([X_1,X_2], \cL_{X_1}\alpha_2 - i_{X_2} d\alpha_1),
\end{equation}
where $\cL$ denotes the Lie derivative. A subbundle $\cD \subset TM \oplus T^*M$ is said to be {\bf closed} under the Courant
bracket if the bracket of any two sections of $\cD$ is again a section of $\cD.$

\begin{defn} A {\bf Dirac structure} on a $n$-dimensional manifold $M$ is an isotropic vector subbundle $\cD$ of rank $n$ of
$TM \oplus T^*M$ which is closed under the Courant bracket. If $\cD$ is a Dirac structure on $M$
then the couple $(M, \cD)$ is called a {\bf Dirac manifold}.
\end{defn}

We will denote the two natural projections $TM \oplus T^*M \to TM$ and $TM \oplus T^*M \to T^*M$ by $proj_{TM}$
and $proj_{T^*M}$ respectively. 

Grosso modo, a Dirac structure $\cD$ on a manifold $M$ 
is nothing but a singular foliation of $M$ by presymplectic leaves:
the singular {\bf characteristic distribution}  $\cC = proj_{TM} \cD$ of $\cD$ 
is integrable in the sense of Frobenius-Stefan-Sussmann due to the closedness condition.
On each leaf $S$ of the associated singular {\bf characteristic foliation} whose tangent distribution is $\cC$ 
there is an induced differential 2-form $\omega_S$  defined by the formula
\begin{equation}
 \omega_S (X,Y) = \langle \alpha_X, Y \rangle,
\end{equation}
where $X,Y \in \cC_x = T_xS$ and $\alpha_X$ is any element of $T^*_xM$ such that $(X,\alpha_X) \in \cD_x.$ 
Due to the closedness of $\cD,$ the 2-form $\omega_S$ is also closed, i.e.
$(S,\omega_S)$ is a presymplectic manifold. The Dirac structure $\cD$ is uniquely determined by its
characteristic foliation and the presymplectic forms on the leaves.

If  $proj_{TM}: \cD \to TM$ is bijective then the characteristic foliation consists of just 1 leaf, i.e. $M$ itself,
and $\cD$ is simply (the graph of) a presymplectic structure $\omega$ on $M$: 
$\cD = \{(X, X \lrcorner \omega) \ | \  X \in TM\}.$  On the other hand, if $proj_{TM}: \cD \to T^*M$ 
is bijective then $\cD$ is (the graph of) a Poisson structure on $M$, and the 2-forms $\omega_S$ are nondegenerate,
i.e. symplectic. However, in general, the ranks of the maps $proj_{T^M}: \cD \to TM$ and $proj_{T^M}: \cD \to TM^*$ may
be smaller than $n$, and may vary from point to point.

\begin{defn}
A Dirac structure $\cD$ on $M$ is called a {\bf regular Dirac structure} of {\bf bi-corank $(r,s)$} if there are two 
nonnegative integers $r,s$ such that $\forall x \in M$ we have
\begin{equation}
\dim (\cD(x) \cap T_xM) = n - \dim proj_{T^*M}\cD (x)  = r
\end{equation}
and
\begin{equation}
\dim (\cD(x) \cap T^*_xM) = n - \dim proj_{TM}\cD (x)  = s.
\end{equation}
\end{defn}

Even if the Dirac structure $\cD$ is non-regular, one can still talk about its bi-corank, defined to be the bi-corank of a
generic point in $M$ with respect to $\cD.$ For regular Dirac structures, we have the following analog of Darboux's theorem:

\begin{prop}[Darboux for regular Dirac] \label{thm:Darboux}
 Let $O$ be an arbitrary point of a $n$-manifold $M$ with a regular Dirac structure $\cD$ of bi-corank $(r,s)$.
Then $n - r - s = 2m$ for some $m \in \bbZ_+ $, and  there is a local coordinate
system $(x_1, \hdots, x_{2m}, y_1,\hdots,y_r,z_1, \hdots, z_s)$  in a neighborhood of $O$, 
such that the local characteristic foliation is of codimension $s$ and given by the local leaves 
\begin{equation}
 \{z_1 = const, \hdots, z_s = const\},
\end{equation}
and on each of these local leaves $S$ the presymplectic form $\omega_S$ is given by the formula
\begin{equation}
\omega_S = \sum_{i=1}^m dx_{2i-1} \wedge dx_{2i}.
\end{equation}
\end{prop}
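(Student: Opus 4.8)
The plan is to decouple the two assertions --- the existence of the transverse coordinates $z_1,\dots,z_s$ cutting out the characteristic leaves, and the presymplectic normal form on each leaf --- and then to reconcile them by a foliated Moser argument. I would first record the parity statement. Since $\cD$ is regular of bi-corank $(r,s)$, the characteristic distribution $\cC=proj_{TM}\cD$ has constant rank $n-s$, and the kernel of $\omega_S$ at $x$ is exactly $\{X\in\cC_x:(X,0)\in\cD_x\}=\cD(x)\cap T_xM$, of constant dimension $r$. Thus on each leaf $\omega_S$ is a skew-symmetric form of constant rank $(n-s)-r=n-r-s$; being skew-symmetric its rank is even, giving $n-r-s=2m$ with $m\in\bbZ_+$. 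By the closedness of $\cD$ the distribution $\cC$ is integrable (as already noted), so Frobenius produces coordinates in which the leaves are the slices $\{z_1=const,\dots,z_s=const\}$; the problem is thereby reduced to a smooth family of closed $2$-forms $\omega_z$ of constant rank $2m$ on the $(n-s)$-dimensional leaves.

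Next I would straighten the kernel. The distribution $\cK=\{X:(X,0)\in\cD\}$ is integrable, since for sections $(X,0),(Y,0)$ of $\cD$ the Courant bracket equals $([X,Y],0)\in\cD$, whence $[X,Y]\in\cK$; moreover $\cK\subset\cC$, and $\cK$ restricts on each leaf to $\ker\omega_S$. Straightening $\cK$ inside the leaves (smoothly in $z$) by Frobenius yields partial coordinates $y_1,\dots,y_r$ with $\cK=\mathrm{span}(\partial/\partial y_1,\dots,\partial/\partial y_r)$, and $\omega_S$ then descends to a nondegenerate closed $2$-form on the local $2m$-dimensional leaf space transverse to $\cK$.

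Finally I would put the descended form into Darboux shape. On the $2m$-dimensional local quotient the form is symplectic, so the classical Darboux theorem supplies coordinates $x_1,\dots,x_{2m}$ with $\omega_S=\sum_{i=1}^m dx_{2i-1}\wedge dx_{2i}$; pulling these back through the quotient and adjoining $y_1,\dots,y_r,z_1,\dots,z_s$ gives the asserted coordinate system. Concretely this last normalization is carried out by the Moser homotopy method: interpolate $\omega_t=\omega_0+t(\omega_1-\omega_0)$ between $\omega_S$ and the constant model, write $\omega_1-\omega_0=d\beta$ via a Poincaré lemma chosen smoothly in the parameters, and solve $i_{X_t}\omega_t=-\beta$ for a time-dependent vector field whose time-one flow carries one form to the other.

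The main obstacle is to run this Moser step in families, i.e. to upgrade the leafwise Darboux theorem to a statement with smooth dependence on the transverse parameter $z$. The points needing care are that all interpolating forms $\omega_t$ share the same constant rank $2m$ and the same kernel $\cK$, so that $\beta$ lies in the image of $\omega_t$ and the equation $i_{X_t}\omega_t=-\beta$ admits a solution $X_t$ tangent to the leaves and depending smoothly on $z$; its flow then automatically preserves the foliation and fixes $z$. Granting this relative solvability, everything else is routine bookkeeping.
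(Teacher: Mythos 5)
Your proposal is correct, and it follows exactly the route the paper has in mind: the paper gives no details, saying only that the proof is ``essentially the same as the proof of the classical local Darboux normal form,'' and your argument (parity from the constant rank of $\omega_S$, Frobenius for $\cC$ and for the kernel distribution $\cK$, reduction to a nondegenerate form on the local quotient, then a parametric Moser/Darboux step smooth in $z$) is precisely the standard filling-in of that one-line reference. The only point I would make explicit is that $\omega_S$ descends to the quotient by $\cK$ because $\cL_Z\omega_S=i_Zd\omega_S+di_Z\omega_S=0$ for $Z\in\cK$, which you use implicitly.
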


The proof of the above proposition is essentially the same as the proof of the classical local Darboux normal form
for symplectic structures.

In particular, if $\cD$ is regular, then 
the {\bf kernel distribution} given by the kernels of the presymplectic forms is regular and integrable, and gives rise
to a foliation called the {\bf kernel foliation} of $\cD$. In local canonical coordinates given by Theorem \ref{thm:Darboux},
the kernel distribution is spanned by $(\frac{\partial}{\partial y_1}, \hdots, \frac{\partial}{\partial y_r}).$

\begin{example}
 Given a manifold $L$, a regular foliation $\cF$ on $L$, and a vector bundle $V$ over $L$, put $M = T^*\cF \oplus V$, where
$T^*\cF$ means the cotangent bundle of the foliation $\cF$ over $L$. Then $M$ admits the following regular Dirac structure $\cD$,
which will be called the {\bf canonical Dirac structure}: each leaf $S$ of the characteristic folitation is of the type
 $S = T^*N \oplus V_N = \pi^{-1} (N)$, where $N$ is a leaf of $\cF$ and $\pi: T^*\cF \oplus V \to L$ is the projection map,
and the presymplectic form on $S = T^*N \oplus V_N$ is the pull-back of the standard symplectic form on the cotangent bundle
$T^*N$ via the projection map $T^*N \oplus V_N \to T^*N.$ When $\cF$ consists of just one leaf $L$ and $V$ is trivial then
this canonical Dirac structure is the same as the (graph of the) standard symplectic structure on $T^*L.$
\end{example}

The notions of Hamiltonian vector fields  and Hamiltonian group actions can be naturally extended from the symplectic and
Poisson context to the Dirac context. In particular, we have:

\begin{defn}
 A vector field $X$ on a Dirac manifold $(M,\cD)$ is called a {\bf Hamiltonian vector field} if 
there is a function $H$, called a {\bf Hamiltonian function} of $X$, such that 
one of the following two equivalent conditions is satisfied:  \\
i) $(X,dH)$ is a section of $\cD$:
\begin{equation}
 (X, dH) \in \Gamma(\cD).
\end{equation}
ii) $X$ is tangent to the characteristic distribution and 
\begin{equation}
 X \lrcorner \omega_S = - d(H|_S)
\end{equation}
on every presymplectic leaf $(S,\omega_S)$ of it.
\end{defn}

\begin{prop}
 If $X$ is a Hamiltonian vector field of a Hamiltonian function $H$
on a Dirac manifold $(M,\cD),$ then $X$ preserves the Dirac structure $\cD,$ the  function $H$, and every leaf
of the characteristic foliation.
\end{prop}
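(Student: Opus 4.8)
The plan is to read off all three conclusions directly from the single hypothesis $(X, dH) \in \Gamma(\cD)$, using in turn the three defining properties of a Dirac structure: that $\cC = proj_{TM}\cD$ is the characteristic distribution, that $\cD$ is isotropic, and that $\cD$ is closed under the Courant bracket.

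First I would dispose of the two easy claims. Since $X = proj_{TM}(X, dH)$ and $(X, dH)$ is a section of $\cD$, the field $X$ takes values in $\cC_x = proj_{TM}\cD(x) = T_xS$ at every point, i.e. $X$ is tangent to the characteristic distribution. As $\cC$ is integrable (Frobenius--Stefan--Sussmann), tangency forces the flow of $X$ to stay within each leaf, so every leaf $S$ of the characteristic foliation is preserved. For the invariance of $H$, I would invoke isotropy: pairing the section $(X, dH)$ with itself gives
\begin{equation}
\langle (X, dH), (X, dH)\rangle = \langle dH, X\rangle = X(H),
\end{equation}
and this vanishes because $\cD$ is isotropic. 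Hence $X(H) = 0$ and $H$ is a first integral of $X$.

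The substantive point is the preservation of $\cD$ itself, and here I would exploit that $dH$ is closed. For an arbitrary section $(Y, \beta) \in \Gamma(\cD)$, the Courant bracket of the paper reduces, because $d(dH) = 0$, to
\begin{equation}
[(X, dH), (Y, \beta)] = ([X, Y], \cL_X \beta - i_Y d(dH)) = ([X, Y], \cL_X \beta) = \cL_X (Y, \beta),
\end{equation}
which is exactly the ordinary Lie derivative of the section $(Y, \beta)$ of $TM \oplus T^*M$ along $X$, acting diagonally ($\cL_X$ on vector fields and on $1$-forms). This Lie derivative is precisely what governs the evolution of sections under the natural lift of the flow $\phi_t$ of $X$ to $TM \oplus T^*M$, namely $(v, \xi) \mapsto ((\phi_t)_* v, (\phi_{-t})^* \xi)$. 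Closedness of $\cD$ under the Courant bracket now says exactly that $\cL_X$ maps $\Gamma(\cD)$ into itself, and the standard argument for invariant subbundles (an infinitesimal invariance $\cL_X \Gamma(\cD) \subseteq \Gamma(\cD)$ integrates to flow-invariance of the constant-rank bundle $\cD$) yields that the lifted flow preserves $\cD$.

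I expect the only genuine obstacle to be bookkeeping rather than a deep difficulty: one must confirm that the Courant bracket really does generate the natural lifted flow with the correct conventions (pushforward on $TM$, pullback by the inverse on $T^*M$), so that the infinitesimal-to-global passage for the subbundle $\cD$ is legitimate; sign conventions here are immaterial to invariance. An equivalent and perhaps more elementary route avoids the bracket entirely: since $\cD$ is recovered from its characteristic foliation together with the leaf forms $\omega_S$, it suffices to check that $X$ preserves each $\omega_S$. Working on a leaf $S$, condition (ii) gives $i_X \omega_S = -d(H|_S)$, and Cartan's formula yields
\begin{equation}
\cL_X \omega_S = d(i_X \omega_S) + i_X(d\omega_S) = d(-d(H|_S)) + 0 = 0,
\end{equation}
since $\omega_S$ is closed. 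Either route closes the argument.
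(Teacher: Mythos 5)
Your proof is correct. The paper states this proposition without any proof, treating it as a standard fact, so there is nothing to compare against; your argument supplies a complete justification. All three steps check out against the paper's conventions: the isotropy computation $\langle (X,dH),(X,dH)\rangle = X(H) = 0$ gives invariance of $H$; tangency of $X$ to $\cC = proj_{TM}\cD$ gives leaf preservation; and since $d(dH)=0$ the bracket $[(X,dH),(Y,\beta)]$ collapses to the diagonal Lie derivative $\cL_X(Y,\beta)$, so closedness of $\cD$ under the Courant bracket is exactly infinitesimal invariance of the constant-rank subbundle, which integrates to flow invariance. Your alternative route via Cartan's formula on each leaf is also valid and is arguably the more elementary of the two, relying only on the paper's remark that $\cD$ is determined by its characteristic foliation and the leaf forms $\omega_S$.
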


\begin{defn} A function $f$ on $(M,\cD)$ is called a {\bf Casimir function} if $H$ is a Hamiltonian function of the trivial vector field,
i.e. $(0,df) \in \Gamma(\cD).$ A vector field $X$ on $(M,\cD)$ is called an {\bf isotropic vector field} if it is Hamiltonian with respect
to the trivial function, i.e. $(X,d0) \in \Gamma(\cD),$ or equivalently, $X$ lies in the kernel of the induced presymplectic forms. 
\end{defn}

Notice that if $X$ is a Hamiltonian vector field of Hamiltonian function $H$, $Y$ is an isotropic vector field, and $f$ is a Casimir
function, then $X+Y$ is also a Hamiltonian vector field of $H$, and $X$ is also a Hamiltonian vector field of $H + f$. Modulo
the isotropic vector fields and the Casimir functions, the correspondence between Hamiltonian vector fields and
Hamiltonian functions will become bijective.

Remark also that, unlike the Poisson case, not every function on a general Dirac manifold can be a Hamiltonian function
for some  Hamiltonian vector field. A necessary (and essentially sufficient) condition for a function $H$ to be a Hamiltonian
function is that the differential of $H$ must annulate the kernels of the induced presymplectic forms.

Another interesting feature of general Dirac structures  is that it is easier for a dynamical system
to become Hamiltonian with respect to a Dirac structure than with respect to a symplectic or 
Poisson structure, as the following example shows:

\begin{example} (See \cite{ZungMinh_2D2012}). A local 2-dimensional integrable vector field with a hyperbolic singularity
$X = h(x,y) (\frac{x}{a} \frac{\partial}{\partial x} - \frac{y}{b} \frac{\partial}{\partial y})$, where $a,b$ are
two coprime natural numbers  is not Hamiltonian with respect to any symplectic or Poisson
structure if $a+b \geq 3,$ but is Hamiltonian with respect to the presymplectic structure
$\omega = x^{a-1} y^{b-1}dx \wedge dy$. On the other hand, 
a local integrable vector field $X = h(y) y \frac{\partial}{\partial x}$ is not Hamiltonian with respect to any presymplectic
structure, but is Hamiltonian with respect to the Poisson structure $y \frac{\partial}{\partial x} \wedge \frac{\partial}{\partial y}$.
If an integrable vector field on a surface admits both of the above singularities  then it cannot
be Hamiltonian with respect to any presymplectic or Poisson structure, but  may  be Hamiltonian with respect to
a Dirac structure.
\end{example}

The theory of isotropic, coisotropic, and Lagrangian submanifolds can be naturally extended from the symplectic category to the Dirac
category. However, in the Dirac case, we will have to distinguish between the Lagrangian and the co-Lagrangian
submanifolds (which are the same thing in the symplectic case).

\begin{defn} Let $(M,\cD)$ be a Dirac manifold, where $\cD$ is a regular Dirac structure of bi-corank $(r,s)$. \\
i)  A submanifold $N$ of  $(M, \cD)$ is called {\bf isotropic} if it 
lies on a characteristic leaf $S$, and the pull-back of
the presymplectic form $\omega_S$ to $N$ is trivial. If, moreover, $N$ is of maximal dimension possible, i.e.
\begin{equation}
 \dim N =  \frac{1}{2} \rank \omega_S  + r = \frac{1}{2}(\dim M + r -s),
\end{equation}
then $N$ is called a {\bf Lagrangian submanifold}.
A foliation (or fibration) on $(M,\cD)$ is called {\bf Lagrangian} if its leaves (or fibers) are Lagrangian. \\
ii) A submanifold $L$ of  $(M,\cD)$ is called a {\bf co-Lagrangian submanifold} if 
\begin{equation}
\dim L = \frac{1}{2} (\dim M - r + s),
\end{equation}
and for every point $x \in L$  the tangent space $T_xL$ satisfies the following conditions : 
a) $T_xL + proj_ {TM}\cD(x) = T_xM;$ b) $T_xL \cap (T_xM \cap \cD(x)) = \{0\};$ 
c) $\omega_S|_{T_xL} = 0$ where $S$ is the characteristic leaf containing $x$.
\end{defn}

Observe that  if  $N$ is a Lagrangian submanifold then $T_xN$ contains the kernel of the presymplectic form at
$x$ for every $x \in N,$  this kernel distribution is regular and integrable in $N$, and $N$ is foliated by the
kernel foliation. If $L$ is a co-Lagrangian submanifold then $L$ is also foliated: the foliation on $L$ is the
intersection of the characteristic foliation of $(M,\cD)$ with $L$. Moreover, if 
$N$ is a Lagrangian submanifold and $L$ is a co-Lagrangian submanifold of $(M,\cD)$, 
then
\begin{equation}
 \dim N + \dim L = \dim M.
\end{equation}

The above definition of Lagrangian and co-Lagrangian submanifolds may differ from the other definitions
in the literature, but they are well-suited for our study of action-angle variables. In particular, it is easy to
see that any local Lagrangian foliation in a regular Dirac manifold admits a local co-Lagrangian section.
We also have the following analogs of some results of Weinstein \cite{Weinstein-Symplectic1977} 
about (co-)Lagrangian submanifolds:

\begin{thm}[Neighborhood of a co-Lagrangian submanifold] \label{thm:co-Lagrangian1}
Let $L$ be a co-Lagrangian submanifold of a regular
Dirac manifold $(M,\cD)$. Then there is a foliation $\cF$ on $L$, a vector bundle $V$ over $L$, and a Dirac
diffeomorphism from a neighborhood $(\cU(L), \cD)$ of $L$ to an open subset of 
$T^*\cF \oplus V$ equipped with the canonical Dirac structure, which sends $L$ to the zero section of $T^*\cF \oplus V.$ 
\end{thm}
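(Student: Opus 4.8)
The plan is to prove this by a Weinstein-type argument: first build the model data $(\cF,V)$ intrinsically from $L$, then construct a preliminary diffeomorphism that is already correct to first order along $L$, and finally straighten it into a genuine Dirac diffeomorphism by a leafwise Moser homotopy performed on the symplectic reductions of the characteristic leaves. Write $n=\dim M = 2m+r+s$. By condition (a), $L$ is transverse to the characteristic foliation and meets each leaf $S$ in an $m$-dimensional submanifold; these intersections assemble into the regular $m$-dimensional foliation $\cF$ on $L$ already singled out in the text (the intersection of the characteristic foliation with $L$). For the bundle I would take $V := (TM\cap\cD)|_L$, the restriction to $L$ of the rank-$r$ kernel distribution of $\cD$; by condition (b) this is transverse to $TL$, so it is genuine normal data. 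A dimension count then matches the model: $T^*\cF\oplus V$ has fibre dimension $m+r$ and total dimension $2m+r+s=n$, its characteristic leaves $\pi^{-1}(N)$ have dimension $2m+r$, and its kernel distribution (the vertical $V$-directions) has rank $r$, exactly as for $\cD$. Moreover condition (c) together with (b) shows that $L\cap S$ is isotropic for $\omega_S$ and meets the kernel trivially, so it descends to a Lagrangian submanifold of the symplectic reduction of $S$.

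Next I would construct a preliminary diffeomorphism $\phi_0$ from a neighborhood $\cU(L)$ onto a neighborhood of the zero section of $T^*\cF\oplus V$ that carries $L$ to the zero section, the characteristic foliation of $\cD$ to that of the canonical structure $\cD_{\mathrm{can}}$, and the kernel foliation of $\cD$ to the vertical $V$-directions, inducing the identifications of $\cF$ and $V$ fixed above. Concretely, using the local fibration of $M$ by characteristic leaves over the $s$-dimensional leaf space (to which $L$ is transverse), I would choose on each leaf an isotropic complement to $T(L\cap S)\oplus(TM\cap\cD)$ to supply the $T^*\cF$ ``momentum'' directions, and glue these together by a tubular-neighborhood (exponential-type) construction. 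As in Weinstein's proof, the freedom in this choice lets me arrange that $\phi_0^*\cD_{\mathrm{can}}$ and $\cD$ agree to first order along $L$; in particular their leafwise presymplectic forms coincide along $L$.

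Finally I would straighten $\phi_0$ by Moser's method, leaf by leaf. After applying $\phi_0$ I am comparing two regular Dirac structures on $T^*\cF\oplus V$ with the same characteristic foliation and the same kernel foliation, whose leafwise presymplectic forms $\omega^0_S=\omega^{\mathrm{can}}_S$ and $\omega^1_S$ agree on the zero section. Passing to the symplectic reduction of each leaf $S$ by its kernel, I reduce to two symplectic forms $\bar\omega^0,\bar\omega^1$ on $T^*N$ agreeing along the Lagrangian zero section. A relative Poincaré lemma then writes $\bar\omega^1-\bar\omega^0 = d\bar\beta$ with $\bar\beta$ vanishing on the zero section, and solving $i_{X_t}\bar\omega_t=-\bar\beta$ for $\bar\omega_t=\bar\omega^0+t(\bar\omega^1-\bar\omega^0)$ produces a leafwise isotopy fixing the zero section with $\psi_1^*\bar\omega^1=\bar\omega^0$. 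Lifting $X_t$ back to the leaves (extended so as to kill the kernel directions) and integrating yields a Dirac isotopy, whence $\phi:=\psi_1\circ\phi_0$ is the desired Dirac diffeomorphism sending $L$ to the zero section.

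I expect the main obstacle to be carrying out this Moser argument \emph{smoothly and simultaneously over the whole $s$-dimensional leaf space}: one needs a parametrized relative Poincaré lemma and smooth dependence of $X_t$ on the transverse coordinates, together with the requirement that the correcting isotopy preserve the kernel foliation, so that the result is a Dirac diffeomorphism rather than merely a leafwise symplectomorphism. The degeneracy of the presymplectic forms is precisely what forces the descent to the reductions and the subsequent lift, and it is the transversality conditions (a) and (b) that guarantee the reductions vary regularly in the transverse parameter and that the lifted vector fields fit together smoothly.
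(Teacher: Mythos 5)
Your proposal is essentially correct, and the preparatory step (taking $\cF$ to be the trace of the characteristic foliation on $L$, $V=(TM\cap\cD)|_L$, checking the dimension count, and choosing an isotropic complement $E$ along $L$ to identify the normal directions with $T^*\cF\oplus V$) coincides with what the paper does. Where you genuinely diverge is in how the first-order identification is promoted to an actual Dirac diffeomorphism, and in how the kernel is handled. The paper first splits off the kernel globally: it picks a submanifold $\cM\supset L$ transverse to the kernel foliation, observes that $\cM$ is a regular Poisson submanifold, solves the problem there, and then extends fiberwise along the kernel foliation to get the $V$-factor; you instead keep the full leaves and pass to their symplectic reductions only inside the Moser step. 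More importantly, in the Poisson case the paper does not use Moser at all: it integrates the chosen complement $E$ to a Lagrangian foliation $\cN$ transverse to $L$ and then defines $\Phi$ in one shot by matching the Hamiltonian flows of $\cN$-basic functions $F$ with those of their fiber-invariant counterparts $\tilde F$ on $T^*\cF$ (the ``uniqueness of marked symplectic realizations'' argument), which yields a canonical isomorphism once $\cN$ is fixed and sidesteps the relative Poincar\'e lemma entirely. Your route --- first-order matching followed by a parametrized relative Moser deformation on the reductions --- is the classical Weinstein argument and is perfectly viable here; its cost is exactly the bookkeeping you flag yourself (smooth dependence of the primitive $\bar\beta$ and of $X_t$ on the $s$ transverse parameters, and choosing a projectable lift of $X_t$ so the correcting isotopy preserves the kernel foliation and hence is a Dirac isotopy), none of which is an obstruction since the homotopy operator in the relative Poincar\'e lemma is explicit and both presymplectic forms are basic for the common kernel foliation. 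What the paper's flow construction buys is uniqueness of $\Phi$ given $\cN$ and a shorter argument; what yours buys is independence from integrating $E$ to an honest Lagrangian foliation transverse to $L$ (you only need first-order data along $L$) and a more standard, checkable deformation scheme.
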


\begin{proof}
 Let us first prove the above theorem in the Poisson case: $\cD = \{(\alpha\lrcorner \Pi, \alpha) \ | \ \alpha \in T^*M\}$
is the graph of a regular Poisson structure $\Pi$ on $M$. In this case, the bi-corank of $\cD$ is of the type $(0,s)$, and
the leaves of the characteristic foliation are symplectic. 

Denote by $\cF$ the foliation on $L$, which is the intersection of the characteristic foliation with $L$:
$T_x\cF = T_xN \cap \cC_xF$ for every $x \in N$, where $\cC$ is the characteristic distribution.  Then, via the
symplectic form on $\cC$,  the vector bundle $T^*\cF$ over $L$ is naturally isomophic to another vector bundle 
over $L$, whose fiber over $x \in L$ is the quotient space $\cC_x/T_x\cF.$ This latter bundle is also naturally 
isomorphic to the normal bundle of  $L$ in $M$. Due to these isomorphisms, there is a vector subbundle $E$ over $L$
of $\cC_L = \cup_{x \in L} \cC_x$, such that $\cC_L = T\cF \oplus E,$ and $E$
is Lagrangian, i.e. $E$ is isotropic with respect to the induced 
symplectic forms on $\cC$ and the rank of $E$ is half the rank of $\cC_L$.

At each point $x \in L,$ the set of germs of local Lagrangian submanifolds in $M$ which contain $x$ and which are tangent
to $E_x$ at $x$ is a contractible space. (By a local symplectomorphism from the characteristic leaf $S$ which contains  $x$
to $\bbT^*\bbR^m$ where $2m = \rank \omega_S$, this space of germs can be idientified with 
the space of germs of exact 1-forms  on $(\bbR^m,0)$ whose 1-jets vanish  at the origin). 
Due to this fact, there are no topological obstructions to the existence of a Lagrangian
foliation in a sufficiently  small neighborhood of $L$ which is tangent to $E_x$ at every point $x \in L$. Denote
by $\cN$ such a Lagrangian foliation. Identify $L$ with the zero section of $T^*\cF$. Then, similarly to the proof of  
uniqueness of marked symplectic realizations of  Poisson manifolds  (see Proposition 1.9.4 of \cite{DufourZung-PoissonBook}), 
one can show that there is a \emph{unique} Poisson isomorphism $\Phi$ from a neighborhood  of $N$ in $M$ to 
a neighborhood of $L$  of in $T^*\cF$, which is identity on $L$ and which sends the leaves of $N$ to the local fibers of
$T^*\cF$. $\Phi$ can be constructed as follows: 

Take a local function $F$ in the neighborhood of a point $x \in L$
in $M$, which is invariant on the leaves of the Lagrangian foliation $\cN$. Push $F$ to $T^*\cF$  by identifying $L$ with the zero
section of $T^*\cF$ and by making the function invariant on the fibers of $T^*\cF$. Denote the obtained local
function on $T^*\cF$ by $\tilde{F}$. Now extend the map $\Phi$ from $L$  (on which $\Phi$ is the identity map) to a 
neigborhood of $L$ by the flows of the Hamiltonian vector fields $X = X_F$ and $\tilde{X} = X_{\tilde{F}}$
of $F$ and $\tilde{F}$: if $y = \phi^t_X(z)$ where $z \in L$ and $\phi^t_X$ denotes the time-$t$ flow of $X$, then 
$\Phi(y) = \phi^t_{\tilde{X}} (z).$
One verifies easily that $\Phi$ is well-defined (i.e. it does not depend on the choice of the functions $F$), and is a required Poisson isomorphism.

Consider now the general regular Dirac case. Denote by $\cK$ the kernel foliation in a small tubular  neighborhood $\cU(L)$
of $L$ in $M$ in this case: the tangent space of $\cK$ at each point is the kernel of the induced presymplectic form at that point. 
Denote by $\cM \subset \cU(L)$ a submanifold which contains $N$ and which is transversal to the kernel foliation.
Then $\cM$ is Poisson submanifold of $(M,\cD)$.
Denote by $\pi_1: \cU(L) \to  \cM$ the projection map (whose preimages are the local leaves
of the lernel foliation).
 The Dirac structure $\cD$ in $\cU(L)$ is uniquely obtained from the
Poisson structure on $\cM$ by pulling back the symplectic 2-forms from the characteristic leaves of $\cM$ to the
characteristic leaves of   $\cU(L)$ via the projection map $\pi_1$
(so that they become presymplectic with the predescribed kernels). Denote by $V$ the vector bundle over $L$
which is the restriction of the kernel distribution to $L$.

According to the Poisson case of the theorem, there is a Poisson diffeomorphism from $\cM$ to a neighborhood of the zero
section in $T^*\cF$.  Extend $\Phi$ to an arbitrary diffeomorphism $\hat{\Phi}$ from $\cU(L)$
to a neighborhood of the zero section in $T^*\cF \oplus V$ which is fiber-preserving 
in the sense  that $\pi_2 \circ \hat{\Phi} = \Phi \circ \pi_1$, where $\pi_2$ denotes the projection $T^*\cF \oplus V \to T^*\cF.$
Then $\hat{\Phi}$ is a  required Dirac diffeomorphism.
\end{proof}

\begin{thm}[Co-Lagrangian sections] \label{thm:co-Lagrangian2}
Let $\cF$ be a regular foliation on a manifold $L$, and $V$ be a vector bundle over $L$. Then a section $K$
of the vector bundle $T^*\cF  \oplus V$ equipped with the canonical Dirac structure is a co-Lagrangian
submanifold of $T^*\cF  \oplus V$  if and only if $L_1 = (\theta, v)$, where $\theta \in \Gamma (T^*\cF)$ 
with $d_\cF \theta = 0,$ and $v \in \Gamma(V)$ is arbitrary.
\end{thm}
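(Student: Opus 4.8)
The plan is to reduce everything to the three pointwise conditions (a), (b), (c) in the definition of a co-Lagrangian submanifold, verify them in adapted local coordinates, and observe that the entire content of the theorem is carried by condition (c).

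First I would set up coordinates. Choose foliated coordinates $(y_1,\dots,y_k,z_1,\dots,z_{\ell-k})$ on $L$ so that the leaves of $\cF$ are the slices $\{z=\mathrm{const}\}$, let $(p_1,\dots,p_k)$ be the fibre coordinates on $T^*\cF$ dual to $\partial/\partial y_i$, and let $(w_1,\dots,w_q)$ be fibre coordinates on $V$ for a local frame. A section is then $K=(\theta,v)$ with $\theta=\sum_i\theta_i\,dy_i\in\Gamma(T^*\cF)$ and $v=\sum_\mu v_\mu e_\mu\in\Gamma(V)$, realised as the submanifold $\{p_i=\theta_i(y,z),\ w_\mu=v_\mu(y,z)\}$. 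A characteristic leaf is $S=\{z=\mathrm{const}\}$, on which $\omega_S=\sum_i dp_i\wedge dy_i$ with kernel spanned by the $\partial/\partial w_\mu$; hence $\cC=\mathrm{proj}_{TM}\cD$ is spanned by $\partial/\partial y_i,\partial/\partial p_i,\partial/\partial w_\mu$, the bi-corank is $(q,\ell-k)$, and $\dim K=\ell=\frac12(\dim M-r+s)$, so the dimension requirement for a co-Lagrangian submanifold is satisfied automatically.

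Second I would dispose of conditions (a) and (b), which I expect to hold for any section regardless of $\theta,v$. A tangent frame of $K$ is given by the leafwise vectors $U_i=\partial/\partial y_i+\sum_j(\partial\theta_j/\partial y_i)\partial/\partial p_j+\sum_\mu(\partial v_\mu/\partial y_i)\partial/\partial w_\mu$ and the transverse vectors $W_a=\partial/\partial z_a+\sum_j(\partial\theta_j/\partial z_a)\partial/\partial p_j+\sum_\mu(\partial v_\mu/\partial z_a)\partial/\partial w_\mu$. Since only the $W_a$ carry a $\partial/\partial z_a$ component, one has $T_xK\cap\cC=\mathrm{span}\{U_i\}$ of dimension $k$, so $\dim(T_xK+\cC)=\ell+(2k+q)-k=\dim M$, giving (a); and $T_xK$ meets the kernel distribution $\cD\cap TM=\mathrm{span}\{\partial/\partial w_\mu\}$ only in $0$, because a graph over $L$ has no nonzero tangent vector with vanishing $(y,z)$-part, giving (b).

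Third comes the crux, condition (c). The \emph{main subtlety} is the correct reading of $\omega_S|_{T_xL}$: since $\omega_S$ is defined only on $\cC_x$, it means the restriction to $T_xK\cap\cC_x=\mathrm{span}\{U_i\}$, the tangent to the foliation induced on $K$. Computing $\omega_S(U_i,U_j)=\partial\theta_j/\partial y_i-\partial\theta_i/\partial y_j=-(d_\cF\theta)(\partial/\partial y_i,\partial/\partial y_j)$, one sees that the transverse derivatives $\partial\theta_i/\partial z_a$ never enter (the $W_a$ leave the leaf $S$) and the whole $V$-part drops out (the $\partial/\partial w_\mu$ lie in $\ker\omega_S$). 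Thus (c) holds if and only if $d_\cF\theta=0$, with $v$ completely unconstrained. Combining the three conditions, $K$ is co-Lagrangian iff $d_\cF\theta=0$, which is the assertion; I expect the only genuine work to be this last computation, everything else being bookkeeping about graphs transverse to the characteristic foliation.
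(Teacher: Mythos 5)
Your proposal is correct and follows essentially the same route as the paper, which simply asserts that ``the proof is the same as in the symplectic case'' (i.e.\ Weinstein's computation that a section of $T^*L$ is Lagrangian iff the $1$-form is closed); your coordinate verification of conditions (a), (b), (c) — with the observation that only (c) constrains the section and reduces to $d_\cF\theta=0$ while the $V$-component and the transverse derivatives drop out — is exactly the worked-out version of that one-line argument.
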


\begin{proof}
 The proof is the same as in the symplectic case, when $V$ is trivial and $\cF$ consists of just 1 leaf, i.e. $L$ itself.
\end{proof}

\section{Liouville torus actions}

Let us recall the following natural notion of integrability of dynamical systems which are not necessarily Hamiltonian
(see, e.g., \cite{Bogoyavlenskij-Extended1998,Zung-Convergence2002,Zung-Torus2006}): 

A $n$-tuple $(X_1\hdots,X_p, F_1,\hdots, F_q)$, where $p \geq 1, q \geq 0, p+q = n,$ $X_i$ are vector fields on
a $n$-dimensional manifold $M$ and $F_j$ are functions on $M$, is called an {\bf integrable system} of {\bf type $(p,q)$}
on $M$ if it satisfies the following commutativity and non-triviality conditions: \\
i) $[X_i,X_j] = 0 \ \ \forall i,j=1,\hdots,p$, \\
ii)  $X_i(F_j) = 0 \ \ \forall i \leq p, j \leq q$,  \\
iii) $X_1\wedge \hdots \wedge X_p \neq 0$ and $dF_1 \wedge \hdots \wedge dF_q \neq 0$ almost everywhere on $M$.

By a {\bf level set} of an integrable system $(X_1\hdots,X_p, F_1,\hdots, F_q)$ we mean a connected component $N$ of
a joint level set
\begin{equation}
 \{F_1 = const,\hdots, F_q = const\}.
\end{equation}
Notice that, by definition, the vector fields $X_1,\hdots, X_p$ are tangent to the level sets of the system.
We will say that the system $(X_1\hdots,X_p, F_1,\hdots, F_q)$ is {\bf regular} 
at $N$ if $X_1\wedge \hdots \wedge X_p \neq 0$ and $dF_1 \wedge \hdots \wedge dF_q \neq 0$  everywhere on $N$.

The following theorem about the existence of a system-preserving torus action near a compact regular level set of an
integrable system is essentially due to Liouville \cite{Liouville-1855}: 

\begin{thm}[Liouville's theorem] \label{thm:Liouville}
Assume that $(X_1,\hdots,X_p,F_1,\hdots,F_q)$ in an integrable system of type $(p,q)$ on a manifold $M$  which
is regular at a compact level set $N$.  Then in a tubular neighborhood $\cU(N)$ there is, up to automorphisms of $\bbT^p$, 
a unique free torus action
\begin{equation}
 \rho: \bbT^p \times \cU(N) \to \cU(N)
\end{equation}
which preserves the system (i.e. the action preserves each $X_i$ and each $F_j$)
and whose orbits are regular level sets of the system.  In particular, $N$ is diffeomorphic to  $\bbT^p$, 
and
\begin{equation}
 \cU(N) \cong \bbT^p \times B^q
\end{equation}
with periodic coordinates $\theta_1,\hdots,\theta_p (mod\ 1)$ on $\bbT^p$ and coordinates
$(z_1,\hdots, z_q)$ on a $q$-dimensional ball $B^q$, such that $F_1,\hdots, F_q$ depend only on the variables
$z_1,\hdots, z_q,$ and the vector fields $X_i$ are of the type
\begin{equation}
 X_i = \sum_{j=1}^p a_{ij}(z_1,\hdots,z_q) \frac{\partial}{\partial \theta_j}.
\end{equation}
\end{thm}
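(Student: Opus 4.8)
The plan is to recognize that the commuting first integrals and commuting vector fields turn each level set into a homogeneous space for a locally free $\bbR^p$-action, and to obtain the torus action as the quotient of $\bbR^p$ by the isotropy lattice of that action. First I would set up the $\bbR^p$-action: since the $X_i$ pairwise commute, the assignment $(t_1,\hdots,t_p)\cdot x = \phi^{t_1}_{X_1}\circ\hdots\circ\phi^{t_p}_{X_p}(x)$ is independent of the order of composition and is a local group action. Regularity at $N$ and the implicit function theorem show that for $z$ in a small ball $B^q$ the level sets $N_z = \{F_1=z_1,\hdots,F_q=z_q\}$ near $N$ are $p$-dimensional manifolds, and compactness of $N$ forces these nearby $N_z$ to be compact as well. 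As $X_i(F_j)=0$, every $X_i$ is tangent to each $N_z$; tangency to a compact invariant manifold gives completeness, so the $\bbR^p$-action is globally defined on the tubular neighborhood $\cU(N)=\bigcup_z N_z$.

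Next I would identify each level set with a torus. On a fixed $N_z$ the fields $X_1,\hdots,X_p$ are everywhere linearly independent and tangent to the $p$-manifold $N_z$, so they span each tangent space and every $\bbR^p$-orbit is open, hence also closed; connectedness of $N_z$ then makes the action transitive. The isotropy subgroup $\Lambda_z\subset\bbR^p$ of any chosen point is therefore a discrete subgroup with $\bbR^p/\Lambda_z\cong N_z$ compact, i.e. a lattice of full rank $p$, so $N_z\cong\bbT^p$ and in particular $N\cong\bbT^p$.

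The main obstacle is the third step: showing that the period lattice $\Lambda_z$ varies smoothly, so that the torus action is smooth over the whole neighborhood. Using that a vector $\lambda$ lies in $\Lambda_z$ exactly when the time-$\lambda$ map fixes the chosen base point, an implicit-function-theorem argument lets me select a basis $\lambda_1(z),\hdots,\lambda_p(z)$ of $\Lambda_z$ depending smoothly on $z$; since $B^q$ is contractible the choice is global over $B^q$. Setting $Y_j = \sum_{i=1}^p (\lambda_j(z))_i X_i$, the coefficients are functions of $z=(F_1,\hdots,F_q)$ alone, hence constant along each invariant $N_z$, so the time-$1$ flow of $Y_j$ equals the $\bbR^p$-action by $\lambda_j(z)$, which is the identity on $N_z$. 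Thus each $Y_j$ generates a circle action; moreover $[Y_j,X_i]=0$ because $[X_k,X_i]=0$ and $X_i((\lambda_j(z))_k)=0$, so the $Y_j$ commute and the resulting action $\rho:\bbT^p\times\cU(N)\to\cU(N)$ preserves each $X_i$ and each $F_j$. Freeness is immediate since $\bbT^p=\bbR^p/\Lambda_z$ acts simply transitively on each $N_z$.

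Finally I would read off the normal form and uniqueness. Taking $\theta_1,\hdots,\theta_p\ (mod\ 1)$ to be the angle coordinates of the flows of $Y_1,\hdots,Y_p$ (so $Y_j=\partial/\partial\theta_j$) and $z_1,\hdots,z_q$ to be transverse coordinates supplied by $dF_1\wedge\hdots\wedge dF_q\neq 0$, triviality of the $\bbT^p$-bundle over the contractible $B^q$ gives $\cU(N)\cong\bbT^p\times B^q$. Inverting the smoothly $z$-dependent period matrix in $Y_j=\sum_i (\lambda_j(z))_i X_i=\partial/\partial\theta_j$ yields $X_i=\sum_j a_{ij}(z)\,\partial/\partial\theta_j$, while the $F_j$ depend only on $z$ since they are constant on orbits. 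For uniqueness, any system-preserving free torus action whose orbits are the level sets must share these orbits, hence the same isotropy lattice $\Lambda_z$ of the underlying $\bbR^p$-action; the only remaining freedom is the choice of basis of $\Lambda_z$, i.e. an element of $GL(p,\bbZ)=\mathrm{Aut}(\bbT^p)$, which is precisely the stated ambiguity.
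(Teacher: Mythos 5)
Your proposal is correct and follows exactly the route the paper indicates: it sketches its proof as (1) the regularity and compactness making $(F_1,\hdots,F_q)$ a trivial fibration near $N$, and (2) the commuting $X_i$ generating a transitive $\bbR^p$-action on the compact $p$-dimensional level sets, which are therefore tori. You simply fill in the standard details (completeness of the flows, open--closed orbits, smooth dependence of the period lattice via the implicit function theorem, and the $GL(p,\bbZ)$ ambiguity for uniqueness) that the paper delegates to the cited references.
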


The proof of the above theorem is absolutely similar to the case of integrable Hamiltonian systems on symplectic manifolds,
see, e.g., \cite{Bogoyavlenskij-Extended1998,Zung-Torus2006}. It consists of 2 main points: 1) The map
$(F_1,\hdots,F_q): \cU(N) \to \bbR^q$ from a tubular neighborhood of $N$ to $\bbR^q$ is a 
topologically trivial fibration by the level
sets, due to the compactness of $N$ and the regularity of $(F_1,\hdots,F_q)$ (attention: if $(F_1,\hdots,F_q)$ is not 
regular at $N$ then this fibration may be non-trivial and may be twisted even if the level sets are smooth); 2) The vector fields
$X_1,\hdots,X_p$ generate a transitive action of $\bbR^p$ on the level sets near $N$, and the level sets are compact
and of dimension $p$, which imply that each level set is a $p$-dimensional compact quotient of $\bbR^p$, i.e. a torus.

The regular level sets in the above theorem are called 
{\bf Liouville tori}. We will also call the torus action in the above theorem the {\bf Liouville torus action}. 
Theorem \ref{thm:Liouville} shows that the  flow of the vector field $X=X_1$ of an integrable system
is quasi-periodic under some natural compactness and regularity conditions. This is the most fundamental geometrical
property of proper integrable dynamical systems. 

\begin{defn}
 An integrable system  $(X_1\hdots,X_p, F_1,\hdots, F_q)$  on a Dirac manifold $(M,\cD)$ is called
an {\bf integrable Dirac system} of type $(p,q)$ if the vector fields $X_1,\hdots, X_p$ preserve the Dirac structure.
\end{defn}

\begin{thm}[Liouville action preserves the Dirac structure]  \label{thm:TorusPreservesStructure}
If an integrable Dirac system  $(X_1\hdots,X_p, F_1,\hdots, F_q)$ 
on a Dirac manifold $(M,\cD)$ is regular at a compact level set $N$,
then the Liouville torus action in a tubular neighborhood of $N$ preserves the Dirac structure. 
\end{thm}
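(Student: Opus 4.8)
The plan is to put the system into Liouville normal form and reduce the structure-preservation of the whole torus action to an infinitesimal statement about its generators, whose only essential ingredient beyond the hypothesis will be the compactness of the Liouville tori. First, by Theorem \ref{thm:Liouville} I would fix coordinates $(\theta_1,\dots,\theta_p,z_1,\dots,z_q)$ on $\cU(N)\cong\bbT^p\times B^q$ in which $X_i=\sum_j a_{ij}(z)\,\partial/\partial\theta_j$ with $\det(a_{ij}(z))\neq0$ everywhere (regularity), and in which $\rho$ is generated by the commuting fields $Z_k=\partial/\partial\theta_k$. Since the time-$s$ maps of $\rho$ are the joint flows of the $Z_k$, and a vector field's flow preserves $\cD$ if and only if the field preserves $\cD$ infinitesimally, it suffices to show that each $Z_k$ preserves $\cD$, i.e. that $([Z_k,W],\cL_{Z_k}\beta)\in\Gamma(\cD)$ for every section $(W,\beta)\in\Gamma(\cD)$.

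The decisive elementary point is that $Z_k$ is a $z$-dependent combination of the $X_i$: writing $g=(a^{-1})$ we have $Z_k=\sum_i g_{ki}(z)X_i$. Expanding $([Z_k,W],\cL_{Z_k}\beta)$ by the Leibniz rule and using that each $X_i$ preserves $\cD$, the ``constant-coefficient'' part $\sum_i g_{ki}\,([X_i,W],\cL_{X_i}\beta)$ is already a section of the subbundle $\cD$, so $Z_k$ preserves $\cD$ if and only if the correction term coming from differentiating the coefficients,
\[
R_k=\sum_{i,l}\frac{\partial g_{ki}}{\partial z_l}\Big(-(W z_l)\,X_i,\ dz_l\wedge i_{X_i}\beta\Big),
\]
lies in $\Gamma(\cD)$. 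This $R_k$ is assembled purely from the $X_i$, the transverse differentials $dz_l$ (the $z_l$ being functions of the first integrals $F_j$), and the derivatives $\partial g_{ki}/\partial z_l$; a priori it is only a section of $TM\oplus T^*M$, and showing $R_k\in\Gamma(\cD)$ is the whole content. For orientation one may specialize: in the presymplectic case $\cD=\mathrm{graph}(\omega)$ this amounts to $\sum_i dg_{ki}\wedge i_{X_i}\omega=0$, and in the Poisson case $\cD=\mathrm{graph}(\Pi)$ to $\sum_i X_i\wedge\Pi^\sharp(dg_{ki})=0$; the general regular Dirac case can then be assembled through the kernel foliation exactly as in the proof of Theorem \ref{thm:co-Lagrangian1}.

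The step I expect to be the main obstacle is precisely the vanishing of $R_k$ modulo $\cD$, and this is where compactness must enter. The idea is that the equations $([X_i,W],\cL_{X_i}\beta)\in\Gamma(\cD)$ for all $i$, written out in the $(\theta,z)$ coordinates, constrain the transverse ($z$-direction) components of $\cD$ by expressing their $z$-dependence through $\theta$-derivatives of the remaining components; because the Liouville tori are compact, everything is periodic in $\theta$, so any component whose $\theta$-derivative is forced to equal a nonzero function of $z$ alone would have to grow linearly in $\theta$, which is impossible on $\bbT^p$. Hence exactly those transverse components that feed into $R_k$ are forced to vanish. The simplest instance already exhibits the mechanism: if $X_1=\partial/\partial\theta_1$ and $X_2=z\,\partial/\partial\theta_1+\partial/\partial\theta_2$ on $\bbT^2\times\bbR$, then preservation of a Poisson bivector forces its $\partial/\partial\theta_2\wedge\partial/\partial z$-component $R=R(z)$ to satisfy $\partial_{\theta_2}P=\pm R$ for a coefficient $P$ that must be periodic, whence $R\equiv0$ and $Z_2=\partial/\partial\theta_2=X_2-zX_1$ preserves the structure. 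Carrying this periodicity argument through the full system of equations $\cL_{X_i}(\cdot)=0$ is the crux; it also pinpoints why compactness of $N$ is indispensable, since for non-compact regular level sets the linear-growth obstruction need not vanish. Once each $Z_k$ is shown to preserve $\cD$, their joint flow, namely the Liouville torus action $\rho$, preserves $\cD$ throughout $\cU(N)$, which is the assertion.
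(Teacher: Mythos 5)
Your overall strategy --- pass to Liouville coordinates, reduce to the generators $Z_k=\partial/\partial\theta_k$, and let compactness of the tori (periodicity in $\theta$) kill the obstruction --- is exactly the mechanism of the paper's proof, and your setup of the correction term $R_k$ is correct. But the decisive step is missing: you yourself flag ``carrying the periodicity argument through the full system of equations $\cL_{X_i}(\cdot)=0$'' as the crux, and you only verify it in one $\bbT^2\times\bbR$ toy example. That step is the entire content of the theorem, and it is not a routine extension of your example: $R_k$ is built from \emph{combinations} of the transverse components of the structure (in the Poisson case its $\partial/\partial\theta\wedge\partial/\partial z$ part reduces to sums of the form $\sum_l h_{lj}\,\partial a_{im}/\partial z_l$), and it is these combinations, not the individual components, that vanish --- the coefficients $g_{ij}$, $h_{ij}$ themselves are generally nonzero, so your phrase ``those transverse components that feed into $R_k$ are forced to vanish'' is not literally what happens. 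The paper organizes the argument by averaging the structure over the torus action and running a descending induction on the number of transverse indices: the purely transverse coefficients $h_{ij}$ satisfy $X_k(h_{ij})=0$ outright (their equation receives no contamination from lower-order terms), then $\hat g_{ij}=g_{ij}-\overline g_{ij}$ is shown to be torus-invariant with zero average, hence zero, then likewise $\hat f_{ij}$. Your ``linear growth on $\bbT^p$'' observation is precisely the statement that a torus-invariant function with zero average vanishes, so the averaging formulation is the systematic version of your idea; you need to actually run it.

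The second gap is the passage from the Poisson and presymplectic cases to general Dirac. You propose to assemble the general case ``through the kernel foliation exactly as in the proof of Theorem \ref{thm:co-Lagrangian1}'', but Theorem \ref{thm:TorusPreservesStructure} does not assume $\cD$ regular near $N$: as the paper remarks, the rank of $\cD\cap TM$ is constant on each Liouville torus but may jump from torus to torus, so there is in general no kernel foliation to quotient by. The paper goes the opposite way: it adds $r$ auxiliary coordinates $w_1,\dots,w_r$ and equips $\cU(N)\times B^r$ with the Dirac structure whose leafwise forms are $\pi^*\omega_S+\sum_i\alpha_i\wedge dw_i$; this is Poisson, the lifted system is integrable of type $(p,q+r)$, and the Poisson case applies and descends. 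If you keep the kernel-foliation reduction you must either add a regularity hypothesis (weakening the theorem) or replace it by such a dimension-raising construction.
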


\begin{proof}
Let us first prove the above theorem for the Poisson case, i.e. when $\cD = \{(\alpha \lrcorner \Pi, \alpha) \ | \ \alpha \in T^*M\}$
is the graph of a Poisson tensor $\Pi$.

According to Liouville's Theorem \ref{thm:Liouville}, there is a coordinate system
\begin{equation}
(\theta_1 (mod\ 1), \hdots, \theta_p (mod\ 1), z_1,\hdots, z_q)
\end{equation}
 in $\cU(N)$, in which the vector fields $X_1, \hdots, X_p$ are of the form
\begin{equation}
X_i = \sum_{j=1}^p a_{ij}(z_1,\hdots, z_p)   \frac{\partial}{\partial \theta_j}.
\end{equation}
We will write the Poisson structure $\Pi$ as 
\begin{equation}
 \Pi = \sum_{i < j} f_{ij} \frac{\partial}{\partial \theta_i} \wedge \frac{\partial}{\partial \theta_j}
+ \sum_{i,j} g_{ij} \frac{\partial}{\partial \theta_i} \wedge \frac{\partial}{\partial z_j} +
\sum_{i < j} h_{ij} \frac{\partial}{\partial z_i} \wedge \frac{\partial}{\partial z_j}.
\end{equation}
The fact that $(X_1\hdots,X_p, F_1,\hdots, F_q)$ is a Dirac system with respect to $\Pi$ means that
$[X_k,\Pi] = 0$ for all $k=1,\hdots, p$ (where $[X_i,\Pi]$ denotes the Schouten bracket of $X_k$ with $\Pi$,
see, e.g., Chapter 1 of \cite{DufourZung-PoissonBook}). In other words, we have, for all $k \leq p$:
\begin{multline}
 \sum_{i < j} X_k(f_{ij}) \frac{\partial}{\partial \theta_i} \wedge \frac{\partial}{\partial \theta_j}
+ \sum_{i,j} X_k(g_{ij}) \frac{\partial}{\partial \theta_i} \wedge \frac{\partial}{\partial z_j}
+ \sum_{i,j} g_{ij} \frac{\partial}{\partial \theta_i} \wedge [X, \frac{\partial}{\partial z_j}] + \\
 \sum_{i < j} X_k(h_{ij}) \frac{\partial}{\partial z_i} \wedge \frac{\partial}{\partial z_j}
+ \sum_{i < j} h_{ij} [X_k,\frac{\partial}{\partial z_i}] \wedge \frac{\partial}{\partial z_j}
+ \sum_{i < j} h_{ij} \frac{\partial}{\partial z_i} \wedge [X_k, \frac{\partial}{\partial z_j}] = 0
\end{multline}
Notice that the coefficient of the term $\frac{\partial}{\partial z_i} \wedge \frac{\partial}{\partial z_j}$
in the above expression is $X_k(h_{ij})$, because
 $[X_k, \frac{\partial}{\partial z_i}] = - \sum_{j=1}^n \frac{\partial a_{kj}}{\partial z_i} \frac{\partial}{\partial \theta_j}$
does not contain the terms $\frac{\partial}{\partial z_j}.$ So we must have
\begin{equation}
X_k(h_{ij}) = 0 
\end{equation}
for all $k=1,\hdots,p$, which  implies that $h_{ij}$ is invariant on the level sets of the system in $\cU(N)$, i.e. 
$h_{ij}$ is invariant under the Liouville torus $\bbT^p$-action (for any indices $i < j$).

Denote by $\overline{f}_{ij} = \int_{\bbT^n} f_{ij} d\theta_1 \hdots d \theta_n$ (resp. $\overline{g}_{ij}, \overline{h}_{ij}$) the average of 
$f_{ij}$ (resp. $g_{ij}, h_{ij}$) with  respect to the Liouville torus acction. Since $h_{ij}$ is $\bbT^p$-invariant,
we have $\overline{h}_{ij} = h_{ij}$. Since $X_k$ preserves $\Pi$ and commutes with the Liouville torus action, 
it also preserves 
\begin{equation}
 \overline \Pi = \sum_{i < j} \overline f_{ij} \frac{\partial}{\partial \theta_i} \wedge \frac{\partial}{\partial \theta_j}
+ \sum_{i,j} \overline g_{ij} \frac{\partial}{\partial \theta_i} \wedge \frac{\partial}{\partial z_j} +
\sum_{i < j} \overline h_{ij} \frac{\partial}{\partial z_i} \wedge \frac{\partial}{\partial z_j}.
\end{equation}
It implies that
\begin{multline}
0 = [X_k, \Pi - \overline \Pi] = 
[X_k, \sum_{i < j} \hat f_{ij} \frac{\partial}{\partial \theta_i} \wedge \frac{\partial}{\partial \theta_j}
+ \sum_{i,j} \hat g_{ij} \frac{\partial}{\partial \theta_i} \wedge \frac{\partial}{\partial z_j}] \\
 = \sum_{i < j} X_k (\hat f_{ij})  \frac{\partial}{\partial \theta_i} \wedge \frac{\partial}{\partial \theta_j}
+ \sum_{i,j} X_k (\hat g_{ij})  \frac{\partial}{\partial \theta_i} \wedge \frac{\partial}{\partial z_j}
+ \sum_{i,j} \hat g_{ij}\frac{\partial}{\partial \theta_i} \wedge [X, \frac{\partial}{\partial z_j}] ,
\end{multline}
where $\hat f_{ij} = f_{ij} - \overline f_{ij}$ and $\hat g_{ij} = g_{ij} - \overline g_{ij}.$ The coefficient
of $\frac{\partial}{\partial \theta_i} \wedge \frac{\partial}{\partial z_j}$ in the above expression is $X_k(\hat g_{ij}),$
so we must have $X_k(\hat g_{ij}) = 0$ (for any $i,j,k$), which implies that $\hat g_{ij}$ is $\bbT^p$-invariant,
which in turn implies that $\hat g_{ij} = 0$ and $g_{ij}$ is $\bbT^p$-invariant. Similarly, we also have that
$f_{ij}$ is $\bbT^p$-invariant. Thus the Poisson stucture $\Pi$ is invariant under the Liouville torus action, and
the theorem is proved in the Poisson case.

Let us now reduce the general Dirac case to the Poisson case. Assume that the kernel $K_x = \cD_x \cap T_xM$
has dimension $\dim K_x = r > 0$ at a point $x \in N$. 
Due to the invariance of $\cD$, and hence of the kernel
distribution, with respect to the vector fields $X_1,\hdots,X_p$, it is easy to see that one can choose $r$ 
1-forms $\alpha_1,\hdots, \alpha_r$ in $\cU(N)$, which have constant coefficients 
in the coordinates $(\theta_1 (mod\ 1), \hdots, \theta_p (mod\ 1), z_1,\hdots, z_q)$, 
and such that their restrictions to the kernel $K_y = \cD_y \cap T_yM$
span the dual space $K^*_y$ of $K_y$  for any $y \in N$. Put
\begin{equation}
 W = \cU(N) \times B^r \cong \bbT^p \times B^q \times B^r
\end{equation}
with additional coordinates $(w_1,\hdots,w_r)$ on $B_r.$  Construct the following Dirac structure $\cD^W$
on $W$: each characteristic leaf $S^W$ of $\cD^W$ is the pull-back of a characteristic leaf  $S$ of $\cD$ 
via the projection map $\pi: W \to \cU(N)$:  $S^W = \pi^{-1}(S)$, and the presymplectic form $\omega_{S^W}$
on $S^W$ is given by the formula:
\begin{equation}
 \omega_{S^W} = \pi^*\omega + \sum_{i=1}^r \alpha_i \wedge dw_i.
\end{equation}
By construction, $ \omega_{S^W}$ is actually a symplectic form, i.e. $\cD^W$ is a Poisson structure.
Lift the vector fields $X_i$ from $\cU(N)$ to $W$ in a trivial way, by keeping the same formula
\begin{equation}
X_i = \sum_{j=1}^p a_{ij}(z_1,\hdots, z_p)   \frac{\partial}{\partial \theta_j}
\end{equation}
for them in the coordinates  $(\theta_1 (mod\ 1), \hdots, \theta_p (mod\ 1), z_1,\hdots, z_q,w_1,\hdots,w_r).$
Lift the Liouville torus $\bbT^p$-action from $\cU(N)$ to $W$ in the same way. Then we 
have an integrable system $(X_1,\hdots,X_p,F_1,\hdots,F_q,w_1,\hdots,w_r)$ of type $(p,q+r)$ on the Poisson
manifold $(W,\cD^W)$, which is regular at $N$. Applying the proved result in the Poisson case, we obtain
that $\cD^W$ is invariant with respect to the Liouville $\bbT^p$-action, which implies that $\cD$ is also
invariant with respect to the Liouville torus action. The theorem is proved.
\end{proof}

\begin{remark}
The Dirac structure in Theorem \ref{thm:TorusPreservesStructure} can be non-regular in the neighborhood of $N$.
In particular, the rank of the kernel distribution $\cD \cap TM$ is constant on each Liouville torus but may vary
from torus to torus. The vector fields $X_i$ in Theorem \ref{thm:TorusPreservesStructure} are not necessarily Hamiltonian,
and the Liouville torus $N$ is not necessarily isotropic in general. 
\end{remark}

Theorem \ref{thm:TorusPreservesStructure} agrees with the general philosophy about toric degree and 
intrinsic torus actions associated to vector fields: ``anything'' preserved by a vector field is also preserved by its 
intrinsic associated torus action, see, e.g., \cite{Zung-Convergence2002,Zung-Convergence2005,Zung-Torus2006}.
This philosophy also leads to the following result:

 \begin{thm}  \label{thm:TorusPreservesStructureII}
If an integrable system  $(X_1\hdots,X_p, F_1,\hdots, F_q)$ on a manifold $M$
 is regular at a compact level set $N$ and preserves a tensor field $\cG \in \Gamma (\otimes^kTM \otimes^h T^*M),$
then its Liouville torus action in a tubular neighborhood of $N$ also preserves the tensor field $\cG$. 
\end{thm}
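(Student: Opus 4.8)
The plan is to follow the same strategy as in the proof of Theorem \ref{thm:TorusPreservesStructure}, now organized around a single integer grading of the tensor bundle. First I would invoke Liouville's Theorem \ref{thm:Liouville} to fix coordinates $(\theta_1,\dots,\theta_p,z_1,\dots,z_q)$ on $\cU(N)$, with $\theta_i\ (mod\ 1)$, in which $X_k=\sum_{j=1}^p a_{kj}(z)\,\frac{\partial}{\partial \theta_j}$ and the Liouville torus action is translation in the $\theta$-variables. Since the coordinate vector fields $\frac{\partial}{\partial \theta_i},\frac{\partial}{\partial z_l}$ and the coordinate $1$-forms $d\theta_i,dz_l$ are all invariant under $\theta$-translation, expanding $\cG$ in the induced basis of $\otimes^kTM\otimes^hT^*M$ shows that the torus action preserves $\cG$ if and only if every component function of $\cG$ is independent of $\theta$. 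Thus the theorem reduces to proving that these component functions are $\bbT^p$-invariant. I would also record that regularity at $N$ forces $\det(a_{kj})\neq 0$, because $X_1\wedge\cdots\wedge X_p=\det(a_{kj})\,\frac{\partial}{\partial \theta_1}\wedge\cdots\wedge\frac{\partial}{\partial \theta_p}$, so that the only solutions of $\sum_j a_{kj}\,\frac{\partial g}{\partial \theta_j}=0$ for all $k$ are the $\theta$-independent functions $g$.

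The key step is a short computation showing that each $\cL_{X_k}$ is \emph{lower triangular} with respect to a natural grading. Because $a_{kj}$ depends only on $z$, one checks directly that $\cL_{X_k}(\frac{\partial}{\partial \theta_i})=0$, that $\cL_{X_k}(\frac{\partial}{\partial z_l})=-\sum_j\frac{\partial a_{kj}}{\partial z_l}\,\frac{\partial}{\partial \theta_j}$, that $\cL_{X_k}(d\theta_i)=\sum_l\frac{\partial a_{ki}}{\partial z_l}\,dz_l$, and that $\cL_{X_k}(dz_l)=0$. Assigning to each basis tensor the degree equal to the number of $\frac{\partial}{\partial z}$ factors in its tangent slots plus the number of $d\theta$ factors in its cotangent slots, these four identities say that $\cL_{X_k}$ sends a degree-$\mu$ basis tensor to a degree-$\mu$ term (from differentiating the coefficient, which leaves the basis tensor unchanged) plus degree-$(\mu-1)$ terms (from differentiating the basis tensor). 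Writing $\cG=\sum_\mu\cG_\mu$ for the decomposition into homogeneous degree, the degree-$\mu$ part of $\cL_{X_k}\cG$ is therefore the coefficient-derivative of $\cG_\mu$ plus the basis-derivative of $\cG_{\mu+1}$.

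With this in hand I would finish by averaging and downward induction, exactly as in the Poisson case. Let $\overline{\cG}=\int_{\bbT^p}\rho_g^*\cG\,dg$ be the average over the Liouville torus action; its component functions are the $\theta$-averages of those of $\cG$, so $\overline{\cG}$ is automatically $\bbT^p$-invariant. Since each $X_k$ commutes with $\rho_g$ and annihilates $\cG$, it also annihilates $\overline{\cG}$, hence the difference $\widehat{\cG}=\cG-\overline{\cG}$, whose component functions have zero $\theta$-mean. Now I induct downward on degree: assuming $\widehat{\cG}_\nu=0$ for all $\nu>\mu$, the degree-$\mu$ part of $0=\cL_{X_k}\widehat{\cG}$ reduces to the coefficient-derivative of $\widehat{\cG}_\mu$, giving $\sum_j a_{kj}\,\frac{\partial \widehat{G}_e}{\partial \theta_j}=0$ for every degree-$\mu$ component $\widehat{G}_e$ and all $k$. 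Invertibility of $(a_{kj})$ makes $\widehat{G}_e$ $\theta$-independent, and a $\theta$-independent function of zero $\theta$-mean vanishes, so $\widehat{\cG}_\mu=0$. The induction then yields $\widehat{\cG}=0$, i.e. $\cG=\overline{\cG}$ is $\bbT^p$-invariant.

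I expect no genuine obstacle beyond the bookkeeping: the entire argument hinges on the triangularity of $\cL_{X_k}$ relative to the degree grading, and the hardest part is simply to set up that grading cleanly for a general mixed tensor and to verify the four Lie-derivative identities that make it triangular. Everything else—the averaging, the vanishing of zero-mean invariant functions, and the use of $\det(a_{kj})\neq 0$—is formal and parallels the reasoning already carried out for the Poisson tensor in Theorem \ref{thm:TorusPreservesStructure}.
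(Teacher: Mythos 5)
Your proposal is correct and follows essentially the same route as the paper: the grading by (number of $\frac{\partial}{\partial z}$ factors) plus (number of $d\theta$ factors) is exactly the filtration $T^{h,k}_s$ used there, the triangularity of $\cL_{X_k}$ is the paper's observation that the filtration is stable under the Lie derivatives, and the averaging plus downward induction on degree is identical. Your version is if anything slightly more explicit, in writing out the four Lie-derivative identities and in noting that $\det(a_{kj})\neq 0$ is what converts $X_k$-invariance of the zero-mean coefficients into $\theta$-independence.
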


\begin{remark}
In the above theorem, the tensor field $\cG$ can be anything: a vector field (an infinitesimal generator of a Lie group action),
a contact 1-form, a volume form, a metric, a Nambu structure, etc. So if the system preserves any
such structure then the associated Liouville torus action also preserves the same structure. The cases of Poisson  and
presymplectic structures  are already covered by Theorem \ref{thm:TorusPreservesStructure}.
\end{remark}

\begin{proof}
The proof is similar to the Poisson case of Theorem \ref{thm:TorusPreservesStructure}. Fix a canonical coordinate
system  $(\theta_1 (mod\ 1), \hdots, \theta_p (mod\ 1), z_1,\hdots, z_q)$ in a tubular neighborhood
$\cU(N)$ of $N$ as given by Theorem \ref{thm:Liouville}. We will make a filtration of the space 
$\Gamma (\otimes^kTM \otimes^h T^*M)$ of  tensor fields of contravariant order $k$  and contravariant order $h$ as follows:

The subspace $T^{h,k}_s$ consists of sections of  $\otimes^kTM \otimes^h T^*M$ whose expression in the coordinates
$(\theta_1 (mod\ 1), \hdots, \theta_p (mod\ 1), z_1,\hdots, z_q,w_1,\hdots,w_r)$
contains only  terms of the type
\begin{equation}
\frac{\partial}{\partial \theta_{i_1}} \otimes \hdots \otimes \frac{\partial}{\partial \theta_{i_a}} \otimes
\frac{\partial}{\partial z_{j_1}} \otimes \hdots \otimes \frac{\partial}{\partial z_{i_b}} \otimes
d\theta_{i'_1} \otimes \hdots \otimes d\theta_{i'_c} \otimes
dz_{j'_1} \otimes \hdots \otimes dz_{j'_d}
 \end{equation}
with $b+c \leq s$. For example,
\begin{equation} 
 T^{h,k}_0 =  \left\{ \sum_{i,j'} f_{i,j'}\frac{\partial}{\partial \theta_{i_1}} \otimes \hdots \otimes \frac{\partial}{\partial \theta_{i_h}} \otimes
dz_{j'_1} \otimes \hdots \otimes dz_{j'_k} \right\} .
\end{equation}
Put $T^{h,k}_{-1}= \{0\}$.  It is clear that
\begin{equation}
 \{0\} = T^{h,k}_{-1} \subset T^{h,k}_ 0 \subset T^{h,k}_ 1 
\subset \hdots \subset T^{h,k}_{h+k} = \Gamma (\otimes^kTM \otimes^h T^*M).
\end{equation}
It is also clear that the above filtration is stable under the Lie derivative of the vector fields $X_1,\hdots, X_p$, i.e.
we have
\begin{equation}
\cL_{X_{\alpha}}\Lambda \in T^{h,k}_s \ \ \forall  s=0,\hdots,k+h, \ \forall \Lambda \in T^{h,k}_s,\ \forall \alpha = 1,\hdots,p.
\end{equation}

Since $\cL_{X_\alpha} \cG = 0$ for all $\alpha=1,\hdots,p$ by our hypothesis, and the Liouville torus action
commutes with the vector fields $X_\alpha$,  we also have that $\cL_{X_\alpha} \overline \cG = 0,$ 
where the overline means the average of a tensor with respect to the Liouville torus action.
Thus we also have
\begin{equation}
\cL_X\hat {\cG} = 0,
\end{equation}
where 
\begin{equation}
 \hat \cG = \cG - \overline \cG
\end{equation}
has average equal to 0.

Similarly to the proof of the Poisson case of Theorem \ref{thm:TorusPreservesStructure}, the equalities
\begin{equation}
\cL_{X_\alpha} \hat \cG = 0 \ \forall \alpha =1,\hdots, p
\end{equation}
imply that the coefficients of $\hat \cG$ of the terms which are not in $T^{h,k}_{h+k-1}$, i.e. the terms of the type
\begin{equation}
\frac{\partial}{\partial z_{j_1}} \otimes \hdots \otimes \frac{\partial}{\partial z_{i_h}} \otimes
d\theta_{i'_1} \otimes \hdots \otimes d\theta_{i'_k}, 
 \end{equation}
are  invariant with respect to the vector fields $X_\alpha$. It means that these coefficient functions are invariant
with respect to the Liouville torus action.
But any $\bbT^p$-invariant function with average 0 is a trivial function, so in fact $\hat \cG$ does not contain any term
outside of $T^{h,k}_{h+k-1},$ i.e. we have:
\begin{equation}
\hat \cG \in  T^{h,k}_{h+k-1}.
\end{equation}
By the same arguments, one can verify that if $\hat \cG \in  T^{h,k}_s$ with $s \geq 0$ 
then in fact $\hat \cG \in  T^{h,k}_{s-1}.$ So by induction we have $\hat \cG = 0,$ i.e. $\cG = \overline \cG$ is invariant
with respect to the Liouville torus action.
\end{proof}

\section{Action-angle variables}

\subsection{Integrable Hamiltonian systems on Dirac manifolds} 
Liouville torus actions give rise to periodic coordinates on regular level sets of integrable systems. 
These periodic coordinates may be called \emph{angle variables}. They exist for any proper regular integrable system, 
without the need of any additional underlying structure. However, in order to get \emph{action variablles}, 
we will need the Hamiltonianity of the system. The word ``action'' itself means a Hamiltonian function or 
a momentum map which generates an associated Hamiltonian action. So in this section we will restrict our attention
to Hamiltonian systems.

\begin{defn}
 An integrable system  $(X_1\hdots,X_p, F_1,\hdots, F_q)$  on a Dirac manifold $(M,\cD)$ is called
an {\bf integrable Hamiltonian system} of type $(p,q)$ if the vector fields $X_1,\hdots, X_p$ are Hamiltonian,
i.e. there are Hamiltonian functions $H_1,\hdots, H_p$ such that $(X_i, dH_i) \in \Gamma(\cD)$ for $i=1,\hdots,p.$
\end{defn} 

Of course, an integrable Hamiltonian system on a Dirac manifold is 
also an integrable Dirac system, but the converse is not true. The above notion of integrable Hamiltonian systems
on Dirac manifolds generalizes the classical notion of integrability à la Liouville for Hamiltonian systems on
symplectic manifolds,  Mischenko-Fomenko's and Nekhoroshev's notions of noncommutative or generalized
integrability of  Hamiltonian systems
\cite{MF-Noncommutative1978,Nekhoroshev-AA1972} and some other notions of 
Hamiltonian (super)integrability as well.

\begin{prop} \label{prop:LiouvilleToriIsotropic}
 Let $(X_1\hdots,X_p, F_1,\hdots, F_q)$ be an integrable Hamiltonian system with corresponding Hamiltonian
functions $H_1,\hdots, H_p$ on a Dirac manifold $(M,\cD)$, and assume that this system is regular at a compact
level set $N$. Then we have: \\
i) The functions $H_1,\hdots, H_p$ are invariant on the Liouville tori in a tubular neighborhood $\cU(N)$
of $N.$ \\
ii)  The Liouville tori in $\cU(N)$ are isotropic. \\
iii) The functions $H_1,\hdots, H_p$ commute with each other in $\cU(N)$, i.e. their Poisson brackets vanish:
$\{H_i, H_j\} = 0.$
\end{prop}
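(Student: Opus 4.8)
The plan is to establish (i) first, from which (ii) and (iii) follow quickly. The key tool throughout is that the Liouville $\bbT^p$-action preserves both the vector fields $X_i$ (Theorem~\ref{thm:Liouville}) and the Dirac structure $\cD$: indeed a Hamiltonian system is in particular an integrable Dirac system, so Theorem~\ref{thm:TorusPreservesStructure} applies. This invariance is what makes averaging over the torus legitimate.

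For (i), I would average each $H_i$ over the Liouville action to get $\overline{H}_i$. Since $(\rho_t)_* X_i = X_i$ and $\rho_t$ preserves $\cD$, each pair $(\rho_t)_*(X_i, dH_i) = (X_i, d(H_i\circ\rho_t^{-1}))$ is again a section of $\cD$; integrating over $\bbT^p$ and using that $\cD$ is a vector subbundle gives $(X_i, d\overline{H}_i) \in \Gamma(\cD)$. Thus $\overline{H}_i$ is a $\bbT^p$-invariant Hamiltonian function for $X_i$, and the remainder $\hat{H}_i = H_i - \overline{H}_i$ satisfies $(0, d\hat{H}_i) \in \Gamma(\cD)$, i.e. it is a Casimir. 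Isotropy of $\cD$ then forces $d\hat{H}_i$ to annihilate $\cC = proj_{TM}\cD$; since each $X_k$ is Hamiltonian, hence tangent to $\cC$, we get $X_k(\hat{H}_i) = 0$ for all $k$. Because $X_1\wedge\cdots\wedge X_p\neq 0$ near $N$, the matrix $(a_{kj})$ of Theorem~\ref{thm:Liouville} is invertible, so $X_k(\hat{H}_i)=0$ is equivalent to $\partial\hat{H}_i/\partial\theta_j = 0$; hence $\hat{H}_i$ is $\bbT^p$-invariant. Having zero average, $\hat{H}_i$ must vanish, so $H_i = \overline{H}_i$ is invariant on the Liouville tori.

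For (ii), note that $\partial/\partial\theta_j = \sum_k (a^{-1})_{jk} X_k$ lies in $\cC$, so the tangent spaces of each Liouville torus lie in the characteristic distribution and, by integrability, the torus is contained in a single characteristic leaf $(S,\omega_S)$. Isotropy is then the statement $\omega_S(X_k, X_l) = 0$; using the Hamiltonian identity $X_k\lrcorner\omega_S = -d(H_k|_S)$ I compute $\omega_S(X_k, X_l) = -X_l(H_k)$, which vanishes by (i) since $X_l$ is tangent to the torus. For (iii), the Dirac bracket of admissible functions is $\{H_i, H_j\} = X_i(H_j)$ (well-defined independently of the chosen Hamiltonian representative, again by isotropy of $\cD$ applied to isotropic vector fields), and this equals $0$ by (i).

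The crux is (i): the mere isotropy of $\cD$ only yields the antisymmetric relation $X_k(H_i) = -X_i(H_k)$ and cannot on its own force these derivatives to vanish, so the component of $H_i$ oscillating along the torus has to be removed by averaging. The point on which everything hinges is that averaging preserves Hamiltonianity, which is precisely where the structure-preserving Theorem~\ref{thm:TorusPreservesStructure} is indispensable.
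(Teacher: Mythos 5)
Your proof is correct, but it takes a genuinely different route from the paper's. You establish (i) first by averaging each $H_i$ over the Liouville torus action: since the action preserves both $X_i$ and $\cD$, the pair $(X_i,d\overline{H}_i)$ is again a section of $\cD$, so $\hat{H}_i=H_i-\overline{H}_i$ is a Casimir, hence constant on each torus, hence zero by the zero-average property; (ii) and (iii) then follow. The paper goes in the opposite order: it first observes that the bracket $\{H_i,H_j\}=X_i(H_j)$ of two Hamiltonian functions on a Dirac manifold is again a Hamiltonian function whose Hamiltonian vector fields are $[X_i,X_j]$ plus isotropic fields, so the hypothesis $[X_i,X_j]=0$ makes $\{H_i,H_j\}$ a Casimir, hence constant on each Liouville torus; its average along the quasi-periodic flow of $X_i$ vanishes, so $X_i(H_j)=0$, which yields (iii) and then (i) and (ii). Both arguments rest on ``Casimir $\Rightarrow$ constant on characteristic leaves'' plus a zero-average step, but they manufacture the Casimir differently, and this affects your closing claim: Theorem \ref{thm:TorusPreservesStructure} is \emph{not} indispensable here. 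The paper's proof of this proposition uses only the Courant-bracket identity for $\{H_i,H_j\}$ and the commutation relations $[X_i,X_j]=0$, never the invariance of $\cD$ under the torus action. Your route buys a clean structural statement (the given Hamiltonians are automatically torus-invariant, recovered by averaging), in keeping with the paper's toric philosophy, but at the cost of invoking the heavier structure-preservation theorem; the paper's route is more elementary and self-contained at this stage.
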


\begin{proof}
 Recall that, similarly to the case of Poisson manifolds, if $H$ and $F$ are two Hamiltonian functions on a Dirac manifold
with two corresponding Hamiltonian vector fields $X_H$ and $X_F$, 
then their Poisson bracket $\{H,F\} := X_H(F) = - X_F(H) = \omega_S(X_H,X_F)$ (where $\omega_S$ denotes the
induced presymplectic forms) is again a Hamiltonian function whose associated 
Hamiltonian vector fields are equal to $[X_H,X_F]$ plus isotropic vector fields.

Since $[X_i,X_j] = 0$ (for any $i,j \leq p$) we have that $X_i(H_j) = \{H_i,H_j\}$ is a Casimir function. In particular,
$X_i(H_j)$ is invariant on the Liouville tori near $N$, because the Liouville tori belong to the characteristic leaves
(because the tangent bundle of the Liouville tori are spanned by the Hamiltonian vector fields $X_1\hdots,X_p$
which are tangent to the characteristic distribution). But the average of $X_i(H_j)$ on each Liouville torus is 0 due to the
quasi-periodic nature of $X_i$, so $X_i(H_j) = 0$ on each Liouville torus, i.e. we have
\begin{equation}
 X_i(H_j)   = \{H_i,H_j\} = 0 \ \text{in} \ \cU(N) \ \forall i=1,\hdots,p, 
\end{equation}
which implies that $H_j$ is invariant on the Liouville tori for all $j =1,\hdots, p.$

The fact that the Liouville tori are isotropic follows from the equation
$\omega_S(X_i,X_j) = \{H_i,H_j\} = 0$ and the fact that  the vector fields $X_1,\hdots,X_p$
span the tangent bundles of the Liouville tori.
\end{proof}

\subsection{Action functions}

\begin{thm}[Liouville action is Hamiltonian]  \label{thm:LiouvilleHamiltonianAction}
 Let $(X_1\hdots,X_p, F_1,\hdots, F_q)$ be an integrable Hamiltonian system,  which is regular at a compact
level set $N$, on a Dirac manifold $(M,\cD)$.  Assume moreover that one of the following two additional
conditions is satisfied: \\
i) The dimension $\dim (Span_\bbR(X_1(x),\hdots,X_p(x)) \cap (T_xM \cap \cD_x))$ 
is constant in a neighborhood of $N$. \\
ii) The characteristic foliation is regular 
in a neighborhood of $N$. \\
Then the Liouville torus action of the system is a Hamiltonian torus action (i.e. its generators are Hamiltonian) 
in a neighborhood $\cU(N)$ of $N$.
\end{thm}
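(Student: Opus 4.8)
The plan is to show that each infinitesimal generator $Z_j=\partial/\partial\theta_j$ of the Liouville $\bbT^p$-action, expressed in the coordinates of Theorem \ref{thm:Liouville}, is a Hamiltonian vector field on $(M,\cD)$; since the $Z_j$ generate the action, this is exactly the assertion of the theorem. First I would record the three facts that drive everything. By Theorem \ref{thm:TorusPreservesStructure} the Liouville action preserves $\cD$, so each $Z_j$ preserves $\cD$ and hence every induced presymplectic form $\omega_S$. Since $Z_j=\sum_i b_{ji}(z)X_i$ with $(b_{ji})=(a_{ij})^{-1}$ (invertible near $N$ by regularity) and each $X_i$ is tangent to the characteristic distribution, $Z_j$ is tangent to the characteristic foliation. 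Finally, by Proposition \ref{prop:LiouvilleToriIsotropic} the Liouville tori are isotropic. By the characterization of Hamiltonian vector fields (condition ii of the definition), showing that $Z_j$ is Hamiltonian amounts to producing a function $K_j$ on $\cU(N)$ with $Z_j\lrcorner\omega_S=-d(K_j|_S)$ on every characteristic leaf $S$.

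The core of the argument is cohomological. On each leaf $S$ I consider the $1$-form $\beta_j:=Z_j\lrcorner\omega_S$. Because $Z_j$ is tangent to $S$ and preserves $\omega_S$, Cartan's formula gives $d\beta_j=\mathcal{L}_{Z_j}\omega_S=0$, so $\beta_j$ is closed. Pairing $\beta_j$ with the generators along a Liouville torus yields $\beta_j(Z_k)=\omega_S(Z_j,Z_k)=0$ by isotropy, so the pullback of $\beta_j$ to each torus vanishes. As the intersection of $S$ with $\cU(N)\cong\bbT^p\times B^q$ deformation retracts onto a single Liouville torus, $H^1$ of this set is generated by the cycles of that torus, over which the periods of $\beta_j$ are precisely the vanishing integrals $\oint\beta_j(Z_k)\,d\theta_k$. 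Hence $\beta_j$ is exact on each leaf, $\beta_j=-d(K_j|_S)$, which is exactly the leafwise Hamiltonian condition. This already establishes that $Z_j$ is Hamiltonian leaf by leaf.

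The remaining, and main, difficulty is to assemble the leafwise primitives into a single smooth function $K_j$ on all of $\cU(N)$, i.e. to fix the leafwise additive constants smoothly in the transverse directions even where the characteristic foliation degenerates; this is exactly where hypotheses (i) and (ii) enter. Under (ii) the characteristic foliation is a genuine locally trivial fibration, so a smooth transversal section exists and normalizing $K_j$ to vanish on it produces a smooth global primitive. Equivalently, I can first reduce to the Poisson case by the symplectification $W=\cU(N)\times B^r$ of Theorem \ref{thm:TorusPreservesStructure}: on $W$ each $Z_j$ still preserves $\cD^W$ and the lifted tori are still isotropic, so the cohomological step applies on the regular symplectic foliation of $W$, and the resulting action functions push back down along $\pi$ because the correction terms $\alpha_l(Z_j)\,dw_l$ are exact (the coefficients $\alpha_l(Z_j)=\alpha_l(\partial/\partial\theta_j)$ are constants, and the primitive obtained on $W$ turns out to be $\pi$-basic). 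Under (i) the quantity $\dim\big(\mathrm{Span}_{\bbR}(X_1,\dots,X_p)\cap(T_xM\cap\cD_x)\big)$ is constant; since a combination $\sum c_iX_i$ lies in the kernel $T_xM\cap\cD_x$ exactly when $\sum c_i\,dH_i|_S=0$, this is the constancy of the rank of the restricted differentials $dH_i|_S$, which prevents the number of independent action directions from jumping and thereby forces the $K_j$ to depend smoothly on the transverse variables.

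I expect the cohomological step to be routine once isotropy is available; the genuine obstacle is the global smoothness of $K_j$ across a possibly singular characteristic foliation, and the whole purpose of conditions (i) and (ii) is to remove it, either by turning the foliation into a fibration or by keeping the relevant ranks constant so that a reduction to the regular Poisson situation on $W$ is available.
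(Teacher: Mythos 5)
Your overall strategy --- show each generator $\partial/\partial\theta_j$ preserves $\cD$ and is tangent to the characteristic distribution, contract with $\omega_S$ to get a leafwise closed $1$-form whose periods vanish on the Liouville tori by isotropy, hence a leafwise primitive, then worry about assembling the primitives smoothly --- is exactly the paper's, and your treatment of case (ii) matches the paper's argument. One technical choice the paper makes that you should adopt: instead of the purely leafwise object $Z_j\lrcorner\omega_S$, it works with the globally defined $1$-form $\beta=\sum_i b_i\,dH_i$ on $\cU(N)$ (where $\partial/\partial\theta_1=\sum_i b_iX_i$ and the $b_i$ are torus-invariant), which satisfies $(\partial/\partial\theta_1,\beta)\in\Gamma(\cD)$ by linearity and restricts on each leaf to the form you consider. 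Defining $A_1(y)=\int_{y_0}^{y}\beta$ along paths inside the leaves from a transversal then makes the transverse smoothness of the primitive automatic, which is precisely the point you correctly identify as the real difficulty.

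The genuine gap is in case (i). Your reduction to the Poisson manifold $W=\cU(N)\times B^r$ buys nothing here: the symplectic leaves of $(W,\cD^W)$ are $\pi^{-1}(S)=S\times B^r$, so the symplectic foliation of $W$ is regular if and only if the characteristic foliation of $\cU(N)$ is --- that is hypothesis (ii), not (i). Condition (i) constrains $\mathrm{Span}_{\bbR}(X_1,\dots,X_p)\cap(T_xM\cap\cD_x)$, not the rank of the characteristic distribution, so after passing to $W$ you face the same singular foliation, and your appeal to ``the regular symplectic foliation of $W$'' is unavailable; likewise the closing claim that constancy of $\operatorname{rank}(dH_i|_S)$ ``forces the $K_j$ to depend smoothly on the transverse variables'' is an assertion, not an argument. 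What the paper actually does under (i): after arranging that $X_1,\dots,X_{p-d}$ are independent modulo the kernel, so that $dH_1\wedge\cdots\wedge dH_{p-d}\neq0$, it chooses the transversal disk $D$ so that $H_1,\dots,H_{p-d}$ are constant on $D$; since a singular leaf $S$ may now meet $D$ in a positive-dimensional set, it checks directly that $\int_\gamma\beta=0$ for every curve $\gamma\subset S\cap D$ (each $dH_i$ pulls back trivially to such $\gamma$), so that $A_1(y)=\int_{y_0}^{y}\beta$ does not depend on the choice of base point in $S\cap D$ and is a well-defined smooth Hamiltonian function. You would need to supply an argument of this kind to close case (i).
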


In particular, if $\cD$ is a Poisson structure then the condition i) holds, and if $\cD$ is a presymplectic structure
then the condition ii) holds, and the theorem is valid in both cases. We don't know if the above theorem is still
true in the ``singular'' case when both of the above two conditions are false or not: we have not been able to produce
a proof nor a counter-example.

\begin{proof}
Let us first prove the theorem under condition ii), i.e. the characteristic foliation if regular. 

Fix a tubular neighborhood $\cU(N) \cong \bbT^p \times B^q$ with a coordinate system
$(\theta_1 (mod\ 1), \hdots, \theta_p (mod\ 1), z_1,\hdots, z_q)$ in which the vector fields $X_1,\hdots, X_p$ 
are constant on Liouville tori, as given by Theorem \ref{thm:Liouville}.
What we need to show is that $\frac{\partial}{\partial \theta_1}$ is a Hamiltonian vector field.
(Then, by similar arguments, the vector fields $\frac{\partial}{\partial \theta_2}, \hdots, 
\frac{\partial}{\partial \theta_p}$ are also Hamiltonian, so the Liouville torus action is Hamiltonian).
We can write
\begin{equation}
\frac{\partial}{\partial \theta_1}  = \sum_{i=1}^p b_i X_i,
\end{equation}
where the functions $b_i$ are invariant on the Liouville tori. Put
\begin{equation}
 \beta = \sum_{i=1}^p b_i dH_i.
\end{equation}
Then $(\frac{\partial}{\partial \theta_1}, \beta) = \sum_{i=1}^p b_i (X_i, dH_i) \in \Gamma(\cD).$
Since $\frac{\partial}{\partial \theta_1}$ preserves the Dirac structure, it also preserves the presymplectic
structure $\omega_S$ of each characteristic leaf $S$, and therefore
$d\beta|_S = d({\frac{\partial}{\partial \theta_1}}\lrcorner \omega_S) = 
\cL_{\frac{\partial}{\partial \theta_1}}\omega_S = 0$, i.e. the restriction of $\beta$ to each
characteristic leaf  is closed. Notice that, by condition ii) and Proposition \ref{prop:LiouvilleToriIsotropic}, each
characteristic leaf in $\cU(N)$ is a trivial fibration by Liouville tori over a disk. The 1-form $\beta$ is
not only closed, but actually exact, on each characteristic leaf, because its pull-back to each Liouville
torus is trivial by construction and by Proposition \ref{prop:LiouvilleToriIsotropic}.  

We can define a Hamiltonian function $A_1$ associated to $\frac{\partial}{\partial \theta_1}$ as follows:
Fix a point $x_0 \in N$, and let $D$ be a small disk containing $x_0$ which is transversal to the characteristic
foliation. Let  $H_1,\hdots, H_p$ be arbitrary Hamiltonian functions associated to $X_1,\hdots,X_p.$ 
For each $y \in \cU(N)$, denote by $y_0$ the  intersection point of the characteristic 
leaf through $y$ in $\cU(N)$ with $D$, and define
\begin{equation} \label{eqn:ActionFunctionIntegral}
A_1(y) = \int_{y_0}^y \beta,
\end{equation}
where the above integral means the integral of $\beta$ over a path on a characteristic leaf from $y_0$ to $y$.
The function  $A_1(y)$ is well defined, i.e. single-valued and does not depend on the choice of the path,
because of the exactness of $\beta$ on the characteristic leaves. 
It is also obvious that $dA_1 = \beta,$ i.e. $A_1$ is a Hamiltonian
function of $\frac{\partial}{\partial \theta_1}.$

Let us now assume that condition ii) fails, but condition i) holds, i.e.
$d = \dim (Span_\bbR(X_1(x),\hdots,X_p(x)) \cap (T_xM \cap \cD_x))$ is a constant on $\cU(N).$
Without loss of generality, we can assume that $X_1(x),\hdots,X_{p-d}(x)$ are linearly independent
modulo $Span_\bbR(X_1(x),\hdots,X_p(x)) \cap (T_xM \cap \cD_x))$ for any $x \in \cU(N)$. It implies
that $dH_1 \wedge \hdots \wedge dH_{p-d}(x) \neq 0$ everywhere in $\cU(N).$  By the inverse function
theorem, there exists a disk $D$ which intersects the characteristic leaf $S \ni x_0$ transversally at $x_0,$
and such that the functions $H_1,\hdots,H_{p-d}$ are invariant on $D$. 

Define the action function $A_1$ by the same Formula \eqref{eqn:ActionFunctionIntegral} as above, with
$y_0 \in D$. Since
the characteristic foliation in $\cU(N)$ is singular, a general characteristic leaf in $\cU(N)$ can intersect $D$ at 
a submanifold instead of just a point. In order to show that $A_1$ is well-defined, we have to check that
if $\gamma$ is an arbitrary oriented curve lying on the intersection of a characteristic leaf $S$ with the disk $D$,
then we have $\int_\gamma \beta = 0.$ But it is the case, because the pull-back of $dH_i$ to $\gamma$ is trivial
for all $i =1,\hdots,p$ by construction. Thus $A_1$ is a well-defined single-valued Hamiltonian function
of $\frac{\partial}{\partial \theta_1}$, and the theorem is proved.
\end{proof}

The Hamiltonian functions $A_1,\hdots,A_p$ of the generators $\frac{\partial}{\partial \theta_1}, \hdots, 
\frac{\partial}{\partial \theta_p}$ of the Liouville torus action given in Theorem \ref{thm:LiouvilleHamiltonianAction}
will be called {\bf action functions} or {\bf action variables} of the integrable system.
Notice that the action functions are  determined by the system only up to Casimir functions and up to a choice
of the generators of the Liouville torus action (or in other words, a choice of the basis of the torus $\bbT^p)$

\begin{remark}
Another way to obtain action variables in the symplectic case is by 
the classical integral formula due to Mineur \cite{Mineur-AA1935-37}, Einstein, 
and othe physicists (see, e.g. \cite{BergiaNavarro-EinsteinQuantization2000}):
\begin{equation} \label{eqn:MineurIntegral}
 A_1 = \int_{\gamma_1} \alpha,
\end{equation}
where $\alpha$ is an 1-form such that $d\alpha|_S = \omega_S$, and $\gamma_1$ is the loop generated by the
vector field $\frac{\partial}{\partial \theta_1}$ on the Liouville torus (for each torus). 
But it is not easy to use  Formula \eqref{eqn:MineurIntegral}
on Dirac manifolds, because of the problem of existence and regularity of $\alpha$ in the Dirac case.  That's why
in the proof of Theorem \ref{thm:LiouvilleHamiltonianAction} we used Formula \eqref{eqn:ActionFunctionIntegral}
instead of Formula \eqref{eqn:MineurIntegral} for the action functions.
\end{remark}

\subsection{Co-affine structures}

Unlike the case of integrable (à la Liouville or noncommutatively integrable) Hamiltonian 
systems on symplectic or Poisson manifolds, action functions of an integrable Hamiltonian system 
on a presymplectic or Dirac manifold need not be functionally
independent in general. For example, consider the simplest case of a proper integrable Hamiltonian system of type (2,1)
on a presymplectic 3-manifold. Then we have a Liouville $\bbT^2$-action and 2 action functions on a
1-dimensional family of Liouville tori, so in this case the two action functions are functionally dependent.

If $\cD$ is a presymplectic structure, then there is no Casimir function, and the action functions are uniquely
determined by the system   up to an integral affine transformation, similarly to the symplectic case.
The functional dependence of action functions of integrable Hamiltonian systems on presymplectic manifolds 
creates a new kind of geometric structures, which may be called  \emph{co-affine} structures:

\begin{defn}
 A co-affine chart of order $p$ on a manifold $P$ is a chart on a ball $B \subset P$
together with a map $\cA: B \to \bbR^p$.  An {\bf (integral) co-affine structure} on a manifold $P$
is an atlas $P = \cup_i B_i$ of affine charts $(B_i \subset P, \cA_i: B \to \bbR^p)$ such that
for any two chart $B_i$ and $B_j$ there is an (integral) affine transformation $T_{ij} : \bbR^p \to \bbR^p$
such that $A_j = T_{ij} \circ A_i$ on the intersection $B_i \cap B_j.$
\end{defn}

\begin{cor} If an integrable Hamiltonian system of type $(p,q)$ on a presymplectic manifold
is regular at a compact level set $N$, then the space of Liouville tori in a tubular neighborhood
$\cU(N)$ of $N$ (every Liouville torus is considered as one point of this space) 
is naturally equipped with an integral  co-affine structure induced by the system.
\end{cor}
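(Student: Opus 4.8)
The plan is to read off the co-affine charts directly from the action functions produced by Theorem \ref{thm:LiouvilleHamiltonianAction}, and to show that the only freedom in their construction is an integral affine transformation of $\bbR^p$. A presymplectic structure is a regular Dirac structure whose characteristic foliation consists of the single leaf $\cU(N)$ itself, so condition ii) of Theorem \ref{thm:LiouvilleHamiltonianAction} holds automatically and the Liouville torus action is Hamiltonian. By Theorem \ref{thm:Liouville} I may identify $\cU(N) \cong \bbT^p \times B^q$ with coordinates $(\theta_1,\hdots,\theta_p,z_1,\hdots,z_q)$, the Liouville tori being the fibers $\{z = const\}$; the space of Liouville tori in $\cU(N)$ is then identified with the ball $B^q$, which will be the domain $B$ of the co-affine chart. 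Theorem \ref{thm:LiouvilleHamiltonianAction} furnishes action functions $A_1,\hdots,A_p$, each $A_i$ being a Hamiltonian function of the generator $\frac{\partial}{\partial \theta_i}$.

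Next I would descend these functions to the base. As in the proof of Theorem \ref{thm:LiouvilleHamiltonianAction}, $dA_i$ equals a form $\beta_i = \sum_j b_{ij}\, dH_j$ whose coefficients $b_{ij}$ are invariant on the Liouville tori; since the $H_j$ are themselves invariant on the Liouville tori by Proposition \ref{prop:LiouvilleToriIsotropic}(i), the pull-back of each $dA_i$ to a (connected) Liouville torus vanishes. Hence every $A_i$ is constant on each Liouville torus and descends to a well-defined function on $B^q$, and together they give a map $\cA = (A_1,\hdots,A_p) : B^q \to \bbR^p$, the claimed co-affine chart of order $p$.

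The substance of the argument is to show that this chart is natural, i.e. that any two admissible constructions of $\cA$ differ by an integral affine transformation of $\bbR^p$; the same computation then governs the transition maps when one assembles the tubular neighborhoods of different level sets into a global atlas. There are exactly two sources of freedom. The first is the choice of generators of the Liouville action: by the uniqueness clause of Theorem \ref{thm:Liouville} the action is canonical up to an automorphism of $\bbT^p$, so two admissible generator systems are related by a matrix in $GL(p,\bbZ)$, and the corresponding action functions are related by the same integer matrix. The second is the choice of Hamiltonian function for each $\frac{\partial}{\partial \theta_i}$ (equivalently, the base point in \eqref{eqn:ActionFunctionIntegral}): if $A_i$ and $A_i'$ are both Hamiltonian for $\frac{\partial}{\partial \theta_i}$ then $(0, d(A_i - A_i')) \in \Gamma(\cD)$, so $A_i - A_i'$ is a Casimir function, and on a presymplectic manifold the relation $(0, df) \in \cD$ forces $df = 0$, so this difference is a constant. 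Combining the two, $\cA$ is well-defined up to an integer linear substitution followed by a constant translation, i.e. up to an integral affine transformation, which is precisely the compatibility required of an integral co-affine atlas.

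The step I expect to be the main obstacle is the bookkeeping in the last paragraph: one must check that on a connected overlap the two families of action functions are related by a \emph{single} globally constant element of $GL(p,\bbZ)$ together with a \emph{single} constant translation vector, rather than by $z$-dependent data. This is where the hypotheses enter essentially — the $GL(p,\bbZ)$ factor is locally constant because the torus action is rigid (Theorem \ref{thm:Liouville}), and the translation factor is constant because the presymplectic condition kills all non-constant Casimir functions. Once this is secured, the atlas $\{(B_i,\cA_i)\}$ is the desired integral co-affine structure on the space of Liouville tori.
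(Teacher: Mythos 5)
Your proposal is correct and follows essentially the same route as the paper, which proves this corollary only implicitly via the remark preceding the definition of co-affine structures: the action functions exist by Theorem \ref{thm:LiouvilleHamiltonianAction}, descend to the space of Liouville tori, and are determined up to a $GL(p,\bbZ)$ change of generators of the torus action plus constants, since Casimir functions on a presymplectic manifold satisfy $df=0$. Your write-up supplies the bookkeeping the paper leaves to the reader, but introduces no new idea beyond the paper's own sketch.
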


Observe that, similarly to Riemannian structures, co-affine structures have a lot of local invariants. 
In particular, one can  talk about the local convexity and the curvature of a co-affine structure.

\subsection{Full action-angle variables}

As was shown in Proposition \ref{prop:LiouvilleToriIsotropic}, Liouville tori of integrable Hamiltonian
systems are isotropic. As a consequence, their dimension satisifes the following inequality:
\begin{equation}
\dim N \leq \frac{1}{2} \rank \omega_S + r,
\end{equation}
where $r = \dim (\cD_x \cap T_xM)$ is the corank of $\omega$ on the characteristic leaf containg a
Liouville torus $N$. The dimension of $N$ is the number of angle variables, and also the  number of
action variables that we can have. In the optimal case, when the above inequality becomes equality, i.e.
$N$ is a Lagrangian submanifold, then we will say that we have a \emph{full set of action-angle variables}.
The word ``full'' means that the presymplectic form in this case can be completely described in terms
of action-angle variables. More precisely, we have: 

\begin{thm}[Full action-angle variables]  \label{thm:fullAA}
Let $(M,\cD)$ be a regular Dirac manifold of bi-corank $(r,s)$ and dimension $n = 2m + r+s$,
and let 
\begin{equation}
(X_1,\hdots,X_{m+r},F_1,\hdots,F_{m+s}) 
\end{equation}
 be an  integrable Hamiltonian system 
which is regular at a compact level set $N$ on $(M,\cD)$. Then the Liouville tori of the system
in a tubular neighborhood $\cU(N)$ are Lagrangian submanifolds of $(M,\cD)$, and there is
a coordinate system   
\begin{equation}
(\theta_1 (mod\ 1), \hdots, \theta_{m+r} (mod\ 1), z_1,\hdots, z_{m+s}) 
\end{equation}
on 
\begin{equation}
\cU(N) \cong \bbT^{m+r} \times B^{m+s},
\end{equation}
and action functions 
\begin{equation}
A_1 = z_1,\hdots, A_m = z_m, A_{m+1},\hdots, A_{m+r} 
\end{equation}
on  $\cU(N)$, such that the functions  $A_{m+1},\hdots, A_{m+r}$ 
depend only on the  coordinates $z_1,\hdots,z_{m+s}$, 
the characteristic leaves of $\cD$ in $\cU(N)$ are
\begin{equation} \label{eqn:RegularLeaves}
 S_{c_1,\hdots,c_s} = \{z_{m+1}= c_1,\hdots z_{m+s} = c_s\}
\end{equation}
and the presymplectic form $\omega_S$ on each leaf $S= S_{c_1,\hdots,c_s}$ is
\begin{equation} \label{eqn:AA-PresympmlecticFrom}
 \omega_S = (\sum_{i=1}^{m+r} dA_i \wedge d \theta_i)|_S
\end{equation}
\end{thm}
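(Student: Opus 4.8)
The plan is to combine the three structural theorems already established with a contraction argument that pins down the presymplectic form, followed by a shear of the angle variables to put it into the stated normal form.

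First I would dispose of the Lagrangian claim by a dimension count. The Liouville tori have dimension $p = m+r$, which equals $\frac{1}{2}(\dim M + r - s) = \frac{1}{2}(2m+2r) = \frac{1}{2}\rank \omega_S + r$, the dimension of a Lagrangian submanifold; since Proposition \ref{prop:LiouvilleToriIsotropic} already gives that the tori are isotropic, maximality of dimension makes them Lagrangian. Next I would fix coordinates: Theorem \ref{thm:Liouville} gives $\cU(N) \cong \bbT^{m+r} \times B^{m+s}$ with coordinates $(\theta,z)$ in which $X_i = \sum_j a_{ij}(z)\frac{\partial}{\partial\theta_j}$. Because $\cD$ is regular its characteristic foliation is regular, so condition (ii) of Theorem \ref{thm:LiouvilleHamiltonianAction} holds and the Liouville torus action is Hamiltonian, with action functions $A_1,\hdots,A_{m+r}$ that are Hamiltonians of $\frac{\partial}{\partial\theta_1},\hdots,\frac{\partial}{\partial\theta_{m+r}}$. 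Since the tori are isotropic, $\{A_i,A_j\} = \omega_S(\frac{\partial}{\partial\theta_i},\frac{\partial}{\partial\theta_j}) = 0$, so each $A_i$ is invariant on the Liouville tori, i.e.\ a function of $z_1,\hdots,z_{m+s}$ alone. I would also choose $s$ independent Casimir functions $G_1,\hdots,G_s$ whose common level sets are the characteristic leaves (these exist locally since the foliation is regular of codimension $s$), and these too are functions of $z$.

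The next step is the coordinate choice: I claim $m$ of the action functions together with the $s$ Casimirs form coordinates transverse to the foliation. Restricting to a leaf $S$, the relation $\frac{\partial}{\partial\theta_k}\lrcorner\omega_S = -dA_k|_S$ together with the inclusion $\ker\omega_S \subset T_x(\text{Liouville torus})$ (valid because the torus is Lagrangian) shows that the map $v \mapsto v\lrcorner\omega_S$ on the $(m+r)$-dimensional tangent space of the torus has kernel exactly $\ker\omega_S$ of dimension $r$; hence the covectors $dA_1|_S,\hdots,dA_{m+r}|_S$ span an $m$-dimensional space, namely the pullback of the cotangent space of the $m$-dimensional space of tori in $S$. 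By constant rank on the contractible ball $B^{m+s}$ I can reindex so that $dA_1,\hdots,dA_m,dG_1,\hdots,dG_s$ are everywhere independent. Setting $z_1 = A_1,\hdots,z_m = A_m$ and $z_{m+1} = G_1,\hdots,z_{m+s} = G_s$ then yields the claimed coordinates, the characteristic leaves \eqref{eqn:RegularLeaves}, and the expression of $A_{m+1},\hdots,A_{m+r}$ as functions of $z_1,\hdots,z_{m+s}$.

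Finally I would compute the form on a leaf $S$. The form $\omega_S$ is torus-invariant (Theorem \ref{thm:TorusPreservesStructure}), has no $d\theta_i\wedge d\theta_j$ terms (isotropy), and contracts with $\frac{\partial}{\partial\theta_k}$ to $-dA_k|_S$; these three facts, using $A_i = z_i$ for $i\leq m$ and $dA_l|_S = \sum_{i\leq m}\frac{\partial A_l}{\partial z_i}dz_i$ for $l > m$, force
\begin{equation}
\omega_S = \Big(\sum_{i=1}^{m+r} dA_i\wedge d\theta_i\Big)\Big|_S + \nu,
\end{equation}
where $\nu = \sum_{i<j\leq m} Q_{ij}(z)\, dz_i\wedge dz_j$ involves only the action differentials and is closed on $S$ (both $\omega_S$ and $\sum dA_i\wedge d\theta_i$ are closed). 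The hard part is removing $\nu$: since $\nu|_S$ is closed on the contractible $z_1,\hdots,z_m$-disk and depends smoothly on the Casimir parameters $z_{m+1},\hdots,z_{m+s}$, a parametrized Poincaré lemma produces functions $\mu_i(z)$ with $\sum_{i\leq m} dz_i\wedge d\mu_i|_S = \nu$; replacing $\theta_i$ by $\theta_i + \mu_i(z)$ for $i\leq m$ — still legitimate periodic coordinates since $\mu_i$ depends only on $z$, and leaving the $z$-coordinates and the leaves unchanged — absorbs $\nu$ and yields exactly \eqref{eqn:AA-PresympmlecticFrom}.

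I expect this last step to be the main obstacle: verifying that the contraction and isotropy conditions leave only an action-action term, and then eliminating it by a simultaneous, leafwise-closed shear of the angle variables, is precisely the analogue of constructing conjugate angle variables, which is the crux of the classical Arnold--Liouville theorem. The genuinely Dirac-specific complications are that the shear must be carried out smoothly in the transverse Casimir directions and that the whole argument is phrased leaf by leaf, but no new idea beyond the regularity of $\cD$ and the already-established torus-invariance seems to be required.
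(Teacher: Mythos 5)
Your proof is correct, and it follows the paper's skeleton (isotropy from Proposition \ref{prop:LiouvilleToriIsotropic} plus the dimension count for the Lagrangian claim; Theorem \ref{thm:LiouvilleHamiltonianAction} under its condition ii) for the action functions; torus-invariance to reduce everything to functions of $z$; and $z_1=A_1,\hdots,z_m=A_m$ completed by Casimirs) up until the last step, where you genuinely diverge. The paper first chooses a \emph{co-Lagrangian section} $D$ of the Liouville fibration, normalizes the angles so that $D=\{\theta=0\}$, and then verifies Formula \eqref{eqn:AA-PresympmlecticFrom} pointwise on $D$ by decomposing pairs of tangent vectors into torus directions and $T_xD\cap T_xS$ directions; the co-Lagrangian condition $\omega_S|_{T_xD}=0$ is exactly what kills the residual action--action terms there, and torus-invariance propagates the identity everywhere. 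You instead start from an arbitrary section, isolate the defect $\nu=\sum_{i<j\le m}Q_{ij}(z)\,dz_i\wedge dz_j$ (correctly: isotropy kills $d\theta\wedge d\theta$, the contraction identity fixes the $d\theta\wedge dz$ block, invariance and closedness make $\nu$ a closed leafwise form in the $z$'s), and remove it by the shear $\theta_i\mapsto\theta_i+\mu_i(z)$ from a parametrized Poincar\'e lemma. The two are equivalent --- your shifted zero section $\{\theta_i=-\mu_i(z)\}$ \emph{is} a co-Lagrangian section --- but your version has the advantage of actually proving the existence of such a section, which the paper only asserts (``it is easy to see that any local Lagrangian foliation in a regular Dirac manifold admits a local co-Lagrangian section''), at the cost of being more computational; the paper's version is more geometric and parallels Theorem \ref{thm:co-Lagrangian2}. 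The one point to state explicitly in your write-up is that the shear leaves the generators $\frac{\partial}{\partial\theta_i}$ and hence the action functions unchanged, so the contraction identities survive the coordinate change; this is immediate since $\mu_i$ depends only on $z$, but it is the reason the argument closes.
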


\begin{proof}
The fact that the Liouville tori are lagrangian is given by Proposition \ref{prop:LiouvilleToriIsotropic} and the
definition of Lagrangian submanifolds. Since the fibration by Liouville tori is Lagrangian, we can choose a
co-Lagrangian section $D$ of this fibration, and a  coordinate system 
$(\theta_1 (mod\ 1), \hdots, \theta_{m+r} (mod\ 1), z_1,\hdots, z_{m+s}) $ on $\cU(N)$
such that the leaves of regular characteristic foliation is given by Formula \eqref{eqn:RegularLeaves} and
the  functions $\theta_1, \hdots, \theta_{m+r}$ vanish on $D$, i.e. the co-Lagrangian disk $D$ is
given by the equation
\begin{equation}
D = \{\theta_1 = 0, \hdots, \theta_{m+r} = 0\}. 
\end{equation}

The existence of action variables $A_1,\hdots,A_{m+r}$ corresponding to vector fields 
$\frac{\partial}{\partial \theta_1}, \hdots,\frac{\partial}{\partial \theta_{m+r}}$ is given by 
Theorem \ref{thm:LiouvilleHamiltonianAction}. Without loss of generality, we can assume that
$TN$ is spanned by $\frac{\partial}{\partial \theta_1}, \hdots,\frac{\partial}{\partial \theta_{m}}$
and the kernel $K = \cD \cap TM.$ Then  $dA_1 \wedge \hdots \wedge dA_m|_S \neq 0$
everywhere in $\cU(N)$, i.e.  the functions $A_1,\hdots, A_m$ are functionally independent
on the symplectic leaves, but $dA_1 \wedge \hdots dA_m \wedge d A_{m+i}|_S = 0 \ \forall i=1,\hdots, r.$
It follows that we can put $z_1 = A_1,\hdots z_m = A_m,$ and choose $z_{m+1},\hdots, z_{m+s}$
to be Casimir functions. 

It remains to prove Formula \eqref{eqn:AA-PresympmlecticFrom}. 
By the invariance of verything with respect to the
Liouville torus action, it is enough to prove this formula at a point $x \in D.$ Without loss of generality,
we can assume that $\{x\} = N \cap D.$

If $X, Y \in T_xS$ are two vector fields tangent to the characteristic foliation at $x$ such that $X,Y \in T_xN$, then
$\omega_S(X,Y) = 0$ due to the isotropy of $N$, and $dA_i(X) = dA_i(Y) = 0$ for all $i=1,\hdots,{m+r},$ which implies
that $(\sum_{i=1}^{m+r} dA_i \wedge d \theta_i) (X,Y) = 0.$

If $X,Y \in T_xD \cap T_xS$ then $\omega_S(X,Y) = 0$ because $D$ is co-Lagrangian, and    
$(\sum_{i=1}^{m+r} dA_i \wedge d \theta_i) (X,Y) = 0$ because $d\theta_i(X) = d\theta_i(Y) = 0$ by construction.

If $X = \frac{\partial}{\partial \theta_j} \in T_xN$ and $Y  \in T_xD \cup T_xS$ then by construction we also have
$\omega_S(X,Y) = \omega(\frac{\partial}{\partial \theta_j}, Y) = - dA_j (Y) = dA_j \wedge d \theta_j  (X,Y) =
(\sum_{i=1}^{m+r} dA_i \wedge d \theta_i) (X,Y).$

Since any vector pair $(X,Y) \in (T_xD \cap T_xS)^2$ can be decomposed into a linear combination of the pairs of the above types, 
Formula \eqref{eqn:AA-PresympmlecticFrom} is proved.
\end{proof}

\begin{remark}
The above theorem is the analog in the Dirac setting of the action-angle variables theorem for Hamiltonian systems on symplectic
or Poisson manifolds which are integrable à la Liouville. 
In the symplectic case,  the fibers of a regular Lagrangian fibration  with compact fibers are automatically tori, but this fact is no longer
true in the Dirac case: due to the degeneracy of the presymplectic forms on characteristic leaves, 
one can have non-torus Lagrangian fibrations with compact fibers on Dirac manifolds. So on a Dirac manifold 
we need not just a Lagrangian fibration, but an integrable Hamiltonian system, in order to get action-angle variables.
\end{remark}

\subsection{Partial action-angle variables} For non-commutatively integrable Hamiltonian systems on symplectic
or Poisson manifolds, there are not enough action-angles variables  
to form a complete  coordinate system, but one can complete these variables by some additional coordinates to form
canonical coordinate systems \cite{Nekhoroshev-AA1972,MF-Noncommutative1978,LMV-AA2011}. The same is
also true in the Dirac setting, when the Liouville tori are isotropic but not Lagrangian:

\begin{thm}[Partial action-angle variables]  \label{thm:partialAA}
Let 
\begin{equation}
(X_1,\hdots,X_{p},F_1,\hdots,F_{q}) 
\end{equation}
 be an  integrable Hamiltonian system 
which is regular at a compact level set $N$ on a regular Dirac manifold $(M,\cD)$ of bi-corank $(r,s)$, such that
the distribution $TN \cap \cD$ is regular of rank $d$ ($0 \leq d \leq r$) 
in a small tubular neighborhood $\cU(N)$  of $N$
fibrated by Liouville tori.  Then there is a coordinate system   
\begin{equation}
(\theta_1 (mod\ 1), \hdots, \theta_{p} (mod\ 1), z_1,\hdots, z_{q}) 
\end{equation}
on 
\begin{equation}
\cU(N) \cong \bbT^{p} \times B^{q}
\end{equation}
and action functions 
\begin{equation}
A_1 = z_1,\hdots, A_{p-d} = z_{p-d}, A_{p-d+1},\hdots, A_{p} 
\end{equation}
on  $\cU(N)$, such that the functions  $A_{p-q+1},\hdots, A_{p}$ 
depend only on the  coordinates $z_1,\hdots,z_{p-d},z_{q-s+1},\hdots, z_{q}$, 
the characteristic leaves of $\cD$ in $\cU(N)$ are
\begin{equation} \label{eqn:RegularLeavesII}
 S_{c_1,\hdots,c_s} = \{z_{q-s+1}= c_1,\hdots, z_{q} = c_s\},
\end{equation}
and the presymplectic form $\omega_S$ on each leaf $S= S_{c_1,\hdots,c_s}$ is of the form
\begin{equation} \label{eqn:AA-PresympmlecticFromII}
 \omega_S = (\sum_{i=1}^{p} dA_i \wedge d \theta_i)|_S + \sum_{p-d<i<j\leq q-s-r+d} f_{ij} dz_i \wedge dz_j|_S .
\end{equation}
\end{thm}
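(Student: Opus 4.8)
The plan is to reduce the partial (noncommutative) case to the already-established results by a careful choice of coordinates, mirroring the structure of Theorem \ref{thm:fullAA} but accounting for the fact that the Liouville tori are now isotropic rather than Lagrangian. First I would set up the canonical coordinate system from Theorem \ref{thm:Liouville}, giving $\cU(N) \cong \bbT^p \times B^q$ with angle variables $\theta_1,\hdots,\theta_p$ and transverse coordinates $z_1,\hdots,z_q$. By Theorem \ref{thm:TorusPreservesStructure} the Liouville torus action preserves $\cD$, and by Theorem \ref{thm:LiouvilleHamiltonianAction} (condition i) holds here since $TN \cap \cD$ is assumed regular of rank $d$) each generator $\frac{\partial}{\partial \theta_i}$ is Hamiltonian, yielding action functions $A_1,\hdots,A_p$. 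The rank condition tells us exactly how many of these are functionally independent on the characteristic leaves: the kernel directions inside $TN$ force $d$ of the action differentials to be dependent, so after relabeling $A_1,\hdots,A_{p-d}$ are independent on the leaves while $A_{p-d+1},\hdots,A_p$ are expressible through the $z$-variables and Casimirs.

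Next I would arrange the $z$-coordinates into three groups according to their geometric role. The Casimir functions (there are $s$ of them, the leaf-coranks) should be chosen as $z_{q-s+1},\hdots,z_q$, so that the characteristic leaves take the form \eqref{eqn:RegularLeavesII}. The independent actions become $z_1 = A_1,\hdots,z_{p-d} = A_{p-d}$. The remaining transverse coordinates $z_{p-d+1},\hdots,z_{q-s}$ are genuine ``extra'' variables that do not come from actions; the block of them indexed by $p-d < i,j \leq q-s-r+d$ is precisely where the leftover two-form $\sum f_{ij}\,dz_i \wedge dz_j$ must live. I would verify the counting: the leaf has dimension $n - s$, the isotropic torus contributes $p$ angle directions, the actions and their conjugate transverse directions account for the symplectic part, and what remains is an even-dimensional complementary block on which the restricted form is some closed two-form with coefficients $f_{ij}$.

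The computation of $\omega_S$ then follows the bilinear case-analysis already used in the proof of Theorem \ref{thm:fullAA}. After choosing a co-Lagrangian section $D = \{\theta_1 = \cdots = \theta_p = 0\}$ of the isotropic fibration (which exists because any local isotropic fibration in a regular Dirac manifold admits a co-Lagrangian section, as noted before Theorem \ref{thm:co-Lagrangian1}), I would check \eqref{eqn:AA-PresympmlecticFromII} at a point $x \in D$ by decomposing arbitrary tangent pairs $(X,Y) \in (T_xS)^2$ into combinations of (a) pairs tangent to $N$, where both sides vanish by isotropy; (b) one $\frac{\partial}{\partial \theta_j}$-vector paired with a section-direction, where $\omega_S(\frac{\partial}{\partial \theta_j}, Y) = -dA_j(Y)$ reproduces the $dA_j \wedge d\theta_j$ term exactly as in Theorem \ref{thm:fullAA}; and (c) pairs of section-directions transverse to the characteristic actions, which is the \emph{new} case producing the residual $f_{ij}\,dz_i \wedge dz_j$ block. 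By $\bbT^p$-invariance of $\omega_S$ it suffices to verify the formula on $D$, and the $f_{ij}$ are automatically $\bbT^p$-invariant hence functions of the $z$-variables alone.

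The main obstacle I anticipate is controlling the residual block precisely — showing that the leftover closed two-form on the non-action transverse directions occupies exactly the index range $p-d < i < j \leq q-s-r+d$ and no more, i.e. that it cannot spill into the Casimir directions or mix with the action-conjugate directions. This is fundamentally a rank and dimension bookkeeping argument: one must confirm that the $r$ kernel directions of $\omega_S$ inside $TN$, the $d$ of them lying in $TN \cap \cD$, and the $s$ Casimirs are all correctly allocated so that the complementary symplectic-type block has the asserted size $2(q-s-r)$ after removing the action-angle pairs. The closedness of $\omega_S$ forces the coefficients $f_{ij}$ to satisfy compatibility (cocycle) relations, but since we only need the form rather than a normal form for the residual block, verifying the index constraints via the regularity hypothesis on $TN \cap \cD$ is the crux; once that is settled the bilinear verification is routine.
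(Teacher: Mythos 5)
Your overall strategy (get the angle coordinates from Theorem \ref{thm:Liouville}, the action functions from Theorems \ref{thm:TorusPreservesStructure} and \ref{thm:LiouvilleHamiltonianAction} via condition i), sort the $z$'s into independent actions, intermediate variables, and Casimirs, then run the bilinear case-analysis of Theorem \ref{thm:fullAA} on a section) is the same as the paper's, and the first half of your argument is fine. But there is a genuine gap exactly at the point you yourself flag as ``the crux'': you never supply the mechanism that confines the residual two-form to the index range $p-d < i < j \leq q-s-r+d$, and the device you do invoke is not available. A section $D$ of the Liouville fibration here has dimension $q = n-p$, which exceeds the co-Lagrangian dimension $\tfrac{1}{2}(\dim M - r + s) = m+s$ whenever the tori are isotropic but not Lagrangian, so $D$ cannot be a co-Lagrangian section and the remark preceding Theorem \ref{thm:co-Lagrangian1} does not apply. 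What the paper requires instead is a section $D$ whose image in the local quotient Poisson manifold $\cU(N)/\cK$ is \emph{coisotropic of codimension $p-d$}, and such that the kernel foliation meets $D$ in a regular foliation of rank $r-d$; this replacement for co-Lagrangianity is the essential new ingredient of the partial case.

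Granting such a $D$, the index constraint is not mere bookkeeping: one constructs, from the $p-d$ independent combinations $\sum_j c_{ij}\,d\theta_j$ annihilating the kernel, vectors $Y_1,\hdots,Y_{p-d}$ tangent to $D$ with $(Y_i,\sum_j c_{ij}d\theta_j)\in\cD$, and shows that $\cY_x = Span_\bbR(Y_1,\hdots,Y_{p-d})\oplus (T_xD\cap\cD_x)$ is an integrable regular distribution of rank $p+r-2d$ on $D$. The middle coordinates $z_{p-d+1},\hdots,z_{q-s-r+d}$ must then be chosen invariant along $\cY$, and the last $r-d$ non-Casimir coordinates $z_{q-s-r+d+1},\hdots,z_{q-s}$ chosen so that their differentials frame $(T_xD\cap\cD_x)^*$. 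Only with this adapted choice do the pairings $\omega_S(Y_i,\cdot)$ (which live entirely in the $d\theta$ directions and hence vanish on $T_xD$) and the pairings with kernel vectors (which vanish outright) drop out, leaving a residual form supported exactly on the middle block. Your proposal asserts this conclusion from ``the regularity hypothesis on $TN\cap\cD$'' without constructing $\cY$ or the adapted coordinates, so as written the final formula \eqref{eqn:AA-PresympmlecticFromII} --- in particular why the sum stops at $q-s-r+d$ rather than $q-s$ --- is not proved.
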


\begin{proof}
 The proof is similar to the proof of Theorem \ref{thm:fullAA}. We can assume that $TN$ 
is spanned by $\frac{\partial}{\partial \theta_1}, \hdots,\frac{\partial}{\partial \theta_{p-d}}$
and  $TN \cap \cD.$ Then the action functions $A_1,\hdots, A_{p-d}$ are independent on the
characteristic leaves, while the remaining action functions $A_{p-d+1},\hdots, A_p$ are
functionally dependent of $A_1,\hdots, A_{p-d}$ on each characteristic leaf, i.e. we can write
$A_{p-d+1},\hdots, A_p$ as functions of $A_1,\hdots, A_{p-d}, z_{q-s+1}, \hdots, z_{q},$
where $z_{q-s+1}, \hdots, z_{q}$ are Casimir functions, and we can put $z_1 = A_1,
\hdots, z_{p-q} = A_{p-d}$.

Take a section $D$ (of dimension $q$) of the fibration by Liouville tori in $\cU(N)$, with the 
following property: the image of $D$ by the projection $proj: \cU(N) \to \cU(N)/\cK,$
where $\cU(N)/\cK$ denotes the Poisson manifold which is the quotient of $\cU(N)$ by the regular
kernel foliation, is coisotropic of codimension $p-d$, and moreover the intersection of the kernel
foliation with $\cD$ is a regular foliation of dimension $r-d$ on $D$. 
We can choose  the angle variables $\theta_1,\hdots, \theta_p$
so that they vanish on $D$.

The closed 1-forms  $d\theta_i$ do not annulate the kernel $TM \cap \cD$ in general. 
But they do annulate $D_x \cap \cD_x$ for any point $x \in D$ by construction. So for each
$x \in D,$ there are $p-d$ linear combinations $\sum_{j=1}^p c_{ij} d\theta_j$ ($i=1,\hdots, p-d$)
which are linearly independent and which annulate the kernel $T_xM \cap \cD_x.$ Hence there exist
vectors $Y_1(x),\hdots, Y_{p-d}(x) \in T_xM$ such that 
$(Y_i(x), \sum_{j=1}^p c_{ij} d\theta_j(x)) \in \cD_x$ for $i=1,\hdots,p-d,$ and these vectors 
$Y_1(x), \hdots,Y_{p-d}(x)$ are linearly independent modulo the kernel $K_x = \cD_x \cap T_xM$
(i.e. no non-trivial linear combination of these vectors lies in $K_x$). By the coisotropy property of
$D$ (or more precisely, of the projection of $D$ in $\cU(N)/\cK$), we can choose $Y_1(x),\hdots,
Y_{p-d}(x)$ so that they belong to $T_xD.$ One verifies directly that the distribution $\cY$ on $D$
given by $\cY_x = Span_\bbR(Y_1(x),\hdots,Y_{p-d}(x)) \oplus (T_xD \cap \cD_x)$ is an integrable
regular distribution of dimension $p+ r - 2d.$ Choose the $q-p+2d-r$ 
coordinates $z_{p-d+1}, \hdots, z_{q-s-r+d}$ in such a way that they are invariant on the 
Liouville tori, and also invariant with respect to the
distribution $\cY$. Choose $r-d$ additional coordinates $z_{q-s-r+d+1},\hdots, z_{q-s}$ such that
they are also invariant on the Liouville tori, and such that their diffenretials when restricted to
the $(r-d)$-dimensional space $T_xD \cap \cD_x$ form a basis of the dual space of that space for any
point $x \in D$. Finally, one verifies that $ \omega_S - (\sum_{i=1}^{p} dA_i \wedge d \theta_i)|_S$
can be expressed as $\sum_{p-d<i<j\leq q-s} f_{ij} dz_i \wedge dz_j|_S$, in a way similar 
to the end of the proof of Theorem \ref{thm:fullAA} 
\end{proof}

\subsection{Final remarks}

In this paper we studied the existence of action-angle variables for a \emph{regular} 
integrable Hamiltonian system on regular Dirac manifolds near a compact level set. 
There is a series of works on action-angle variables near singularities
for integrable Hamiltonian on sympletic manifolds, see, e.g.,
\cite{DufourMolino-AA1990, MirandaZung-NF2004,
San-Semiclassical2006,Zung-Integrable1996,Zung-Convergence2005,Zung-Torus2006}.
It  would be natural to extend them to the case of Dirac manifolds.
Other interesting problems to consider include global action-angle variables, K.A.M. theory, and
quantization of integrable Hamiltonian systems on Dirac manifolds, etc.

\vspace{0.5cm}

\end{document}